\newif\ifarxiv
\newif\ifwiley
\newif\ifshowchanges
\newcommand{\mytitle}{Robust estimation of a Markov chain transition matrix from multiple sample paths}
\newcommand{\WileyCopyright}{This is an open access article under the terms of the Creative Commons Attribution License, which permits use, distribution and reproduction in any medium, provided the original work is properly cited.
© 2026 The Author(s). Statistica Neerlandica published by John Wiley \& Sons Ltd on behalf of Netherlands Society for Statistics and Operations Research.}
\renewcommand{\margnote}[1]{\mbox{}\marginpar{\raggedright\hspace{0pt}{\color{purple} \tiny #1}}}
\title{\mytitle}
\author[1\authfn{1}]{Lasse Leskel\"a}
\author[2\authfn{1}]{Maximilien Dreveton}
\affil[1]{Department of Mathematics and Systems Analysis, Aalto University, Espoo, 02015, Finland}
\affil[2]{Information and Network Dynamics Laboratory, EPFL, Lausanne, Switzerland}
\runningauthor{Leskel\"a and Dreveton}
\numberwithin{equation}{section}
\theoremstyle{plain}
\newtheorem{theorem}{Theorem}[section]
\newtheorem{proposition}[theorem]{Proposition}
\newtheorem{lemma}[theorem]{Lemma}
\newtheorem{corollary}[theorem]{Corollary}
\newtheorem*{theorem*}{Theorem}
\theoremstyle{definition}
\newtheorem{remark}[theorem]{Remark}
\newtheorem{example}[theorem]{Example}
\newtheorem*{remark*}{Remark}
\renewcommand{\vec}{\operatorname{vec}}
\renewcommand{\spenorm}[1]{\lVert#1\rVert_2}
\newcommand{\Wcol}{W^{\rm col}}
\newcommand{\Wrow}{W^{\rm row}}
\newcommand{\gamin}{\ga_{\rm min}}
\newcommand{\garev}{\ga_{\rm rev}}
\newcommand{\barpi}{\bar\pi}
\newcommand{\barpimin}{\barpi_{\rm min}}
\newcommand{\pimin}{\pi_{\rm min}}
\newcommand{\pimink}{\pi^*_k}
\newcommand{\piminzero}{\pi^*_0}
\newcommand{\tC}{\tilde C}
\newcommand{\newtext}[1]{{\color{purple}#1}}
\newcommand{\margnote}[1]{\mbox{}\marginpar{\raggedright\hspace{0pt}{\color{purple} \tiny #1}}}
\newcommand{\newtext}[1]{#1}
\newcommand{\margnote}[1]{}
\begin{document}

\ifarxiv
\title{\mytitle\thanks{\WileyCopyright}}
\date{22 Jan 2026}
\author{
Lasse Leskelä\thanks{Corresponding author}\\[.5ex]
\small Aalto University\\
\small \texttt{lasse.leskela@aalto.fi}
\and
Maximilien Dreveton\\[.5ex]
\small Universit{\'e} Gustave-Eiffel \\
\small \texttt{maximilien.dreveton@univ-eiffel.fr}
}
\maketitle
\fi

\begin{abstract}
Markov chains are fundamental models for stochastic dynamics, with applications in a wide range of areas such as population dynamics, queueing systems, reinforcement learning, and Monte Carlo methods. Estimating the transition matrix and stationary distribution from observed sample paths is a core statistical challenge, particularly when multiple independent trajectories are available. While classical theory typically assumes identical chains with known stationary distributions, real-world data often arise from heterogeneous chains whose transition kernels and stationary measures might differ from a common target. We analyse empirical estimators for such parallel Markov processes and establish sharp concentration inequalities that generalise Bernstein-type bounds from standard time averages to ensemble-time averages. Our results provide nonasymptotic error bounds and consistency guarantees in high-dimen\-sion\-al regimes, accommodating sparse or weakly mixing chains, model mismatch, nonstationary initialisations, and partially corrupted data. These findings offer rigorous foundations for statistical inference in heterogeneous Markov chain settings common in modern computational applications.
\end{abstract}

\ifwiley
\keywords{}
\fi

\ifarxiv
\noindent
{\bf Keywords:} Markov chain, ensemble average, panel average, stochastic matrix, concentration inequality
\bigskip
\fi

\ifarxiv
\small
\noindent{Published as:}
\emph{Statistica Neerlandica} 80(1), e70023, 2026. \url{https://doi.org/10.1111/stan.70023}
\fi

\section{Introduction}

Markov chains are fundamental models for stochastic dynamics, with applications ranging from population and queueing models \citep{Diekmann_Heesterbeek_Britton_2013,Meyn_Tweedie_2009} to 
Monte Carlo methods \citep{Andrieu_Lee_Vihola_2018,Brown_2025,Margossian_etal_2024} and
reinforcement learning \citep{Bardenet_Doucet_Holmes_2017,Chandak_Borkar_Dodhia_2022,Qu_Wierman_2020}.
A key task in such settings is the estimation of the transition probability matrix from observed data \citep{Craig_Sendi_2002,Mostel_Pfeuffer_Fischer_2020,TrendelkampSchroer_etal_2015}.
This problem has a long history: classical works such as \citep{Anderson_Goodman_1957,Billingsley_1961}
studied the limiting properties of estimators under long time horizon. More recent research has focused
on nonasymptotic bounds that provide finite-sample guarantees,
particularly for sparse models and high-dimensional settings \citep{Chatterjee_2025,Hao_Orlitsky_Pichapati_2018,Lezaud_1998,Wolfer_2024,Wolfer_Kontorovich_2019,Wolfer_Kontorovich_2021}.
However, this literature almost exclusively considers data from a single chain, typically assumed to be stationary and well-specified.

In contrast, many practical settings involve heterogeneous sources with multiple chains, where the transition dynamics and stationary behaviour may vary across trajectories and differ from a shared target model with transition matrix $P$.
This arises in applications like cohort studies, epidemiology, and distributed systems. We study the problem of estimating $P$ and its stationary distribution $\pi$ from $M$ such paths of length $T$.
A natural approach aggregates observed transitions and state visits into empirical estimators $\hat{P}$ and $\hat{\pi}$. While well understood for homogeneous, stationary chains, their performance under heterogeneity remains largely unexplored.

We develop a general theoretical framework for this setting, quantifying how estimation errors depend on heterogeneity in transition matrices, initial distributions, and mixing properties. We derive nonasymptotic error bounds and consistency guarantees that hold even in high-dimensional regimes, where the number of states, trajectories, and trajectory lengths may all grow. Our results are particularly suited to sparse or slowly mixing chains and allow for partial data corruption.
This work extends several foundational results from the single-trajectory literature---such as concentration inequalities for empirical averages
\citep{Huang_Li_2024+,Miasojedow_2014,Paulin_2015}
and transition matrix estimation
\citep{Huang_Li_2025,Wolfer_Kontorovich_2019,Wolfer_Kontorovich_2021}---to the setting of \emph{ensemble-time averages} over multiple heterogeneous trajectories. In particular, we generalise a Paulin--Bernstein concentration inequality \citep{Paulin_2015}, and the analysis of the empirical transition matrix in \citep{Wolfer_Kontorovich_2019}, from the single-trajectory setting to the multi-trajectory regime.

The work presented in this article was originally motivated by the study of community detection in temporal networks,
in which interactions between intra- and inter-community node pairs evolve over time according to binary Markov chains with transition matrices $P$ and $Q$, respectively~\citep{Avrachenkov_Dreveton_Leskela_2024, Avrachenkov_Dreveton_Leskela_2025+}.
\newtext{
A widely used clustering procedure follows a three-stage approach:\margnote{Numbering changed to inline.}
(i) an initial (usually noisy) assignment of community labels,
(ii) estimation of interaction parameters via empirical averages conditioned on the initial partition,
(iii) refinement of the partition using likelihood-based updates \citep{Abbe_Bandeira_Hall_2016,Gao_Ma_Zhang_Zhou_2017}.
}
In this setting, errors from stage (i) propagate into stage (ii), contaminating the empirical estimation process. As a result, the empirical estimator for~$P$ incorporates interaction sequences not only from correctly identified intra-community node pairs but also from misclassified inter-community pairs, which in fact follow the dynamics of the matrix~$Q$. This highlights the need for a robust understanding of empirical estimators under model mismatch and heterogeneous trajectory sources—a regime that is the focus of this work.

The rest of the paper is organised as follows.\margnote{Updated structure}
\newtext{
Section~\ref{sec:Model} describes the model, and Section~\ref{sec:MainResults} summarises the main results. Section~\ref{sec:NumericalExperiments} presents the results of numerical experiments, and Section~\ref{sec:Conclusions} concludes. All proofs are contained in Appendix~\ref{sec:ProofAppendix}, with the core part in Appendix~\ref{sec:MarkovConcentration}, which contains a concentration analysis of ensemble–time averages that may be of independent interest.
}

\section{Model description}
\label{sec:Model}

Fix integers $M, T \ge 1$, and let $\Ome$ be a finite set. The objective is to estimate the transition matrix $P$ of a Markov chain on $\Ome$ from $M$ independent sample paths, that are arranged in a data matrix $(X_{m,t})$
with rows indexed by $m = 1, \dots, M$ and columns by $t = 0, \dots, T$.

A natural estimator for $P$ is the \emph{empirical transition matrix} $\hat{P} \in \R^{\abs{\Ome} \times \abs{\Ome}}$, defined by
\begin{equation}
\label{eq:EmpiricalTransitionMatrix}
\hat{P}_{i,j} \weq
\begin{cases}
\displaystyle\frac{N_{i,j}}{N_i} & \text{if } N_i > 0, \\
\displaystyle\frac{1}{\abs{\Ome}} & \text{otherwise},
\end{cases}
\end{equation}
where the state and transition counts are given by
\begin{equation}
\label{eq:MarkovFrequencies}
N_i = \sum_{m=1}^M \sum_{t=1}^{T} X_{m,t-1}^{(i)},
\qquad
N_{i,j} = \sum_{m=1}^M \sum_{t=1}^{T} X_{m,t-1}^{(i)} X_{m,t}^{(j)},
\end{equation}
with indicator variables $X_{m,t}^{(i)} = 1(X_{m,t} = i)$.
Furthermore, if $P$ is irreducible and aperiodic, its stationary distribution $\pi$ may be estimated by the \emph{empirical distribution}
\begin{equation}
\label{eq:EmpiricalDistribution}
\hat{\pi}_i \weq \frac{N_i}{MT}.
\end{equation}
In the setting where all sample paths are realisations of a Markov chain with transition matrix $P$,
it is well known that $\hat{P}$ coincides with the maximum likelihood estimate \citep{Anderson_Goodman_1957}.

To investigate the robustness of these estimators under model misspecification, we consider a more general setting in which the observed sample paths are independent but not identically distributed. Specifically, each row of the data matrix $(X_{m,t})$ is a realisation of an irreducible and aperiodic Markov chain with
transition matrix $P_m \in \R^{\abs{\Ome} \times \abs{\Ome}}$,
initial distribution $\mu_m$,
stationary distribution $\pi_m$, and
pseudo-spectral gap $\ga_m$.
We expect the estimators \eqref{eq:EmpiricalTransitionMatrix} and \eqref{eq:EmpiricalDistribution}
to perform well when $P_m \approx P$ and $\pi_m \approx \pi$ for most $m$,
and when the total sample size $MT$ is large.\margnote{Old Sec 2.3 merged to updated Sec 3.3}

\section{Main results}
\label{sec:MainResults}
This section contains the main results.
Section~\ref{sec:NonasymptoticErrorBoundsNew} presents general nonasymptotic error 
bounds.
Section~\ref{sec:FullyCorrupted} extends the analysis to a setting where
some rows of the data matrix are fully corrupted.
Section~\ref{sec:Consistency} presents large-scale consistency results 
obtained as corollaries.

\subsection{Nonasymptotic error bounds}
\label{sec:NonasymptoticErrorBoundsNew}

\newtext{Recall that each row of the data matrix $(X_{m,t})$} \margnote{Shortened}
is a Markov chain on $\Omega$ with
transition matrix $P_m$, initial distribution $\mu_m$,
and 
stationary distribution $\pi_m$.
To quantify how close
\newtext{the transition matrices governing the rows are from the target matrix $P$,
we denote}\margnote{Removed unused symbols $\de_1,\de_\infty, \bar\pi_{\rm max}$}
\begin{equation}
 \label{eq:Deltas}
 \begin{aligned}
 \Delta_1       = \frac{1}{M} \sum_m \| P_m - P \|_\infty, 
 \qquad \Delta_\infty  = \max_m \| P_m - P \|_\infty.
 \end{aligned}
\end{equation}
We also denote $\barpimin = \min_i \newtext{\barpi(i)}$\margnote{Updated $\barpi_i \mapsto \barpi(i)$}
where 
$
\newtext{\barpi(i)} = \frac{1}{M} \sum_m \pi_m(i)
$
is the long-term average frequency of state $i$,
and the minimum pseudo-spectral gap by $\gamin = \min_m \ga_m$.
The deviation from stationarity is quantified by
\begin{equation}
 \label{eq:RenyiTwo}
 \eta
 \weq \frac{1}{M} \sum_m D_2( \mu_m \| \pi_m ),
\end{equation}
where $D_2$ refers to the \Renyi divergence of order two (see Appendix~\ref{sec:InformationTheory} for details).

\newtext{
The mixing properties\margnote{Definition moved here, to be visible earlier.}
of the Markov chains are conveniently described in terms of a normalised time parameter 
\begin{equation}
 \label{eq:TPrime}
 T'
 \weq \frac{\gamin T}{1 + 1/(\gamin T)},
\end{equation}
where $\gamin = \min_m \ga_m$ denotes the minimum pseudo-spectral gap of the transition matrices $P_m$.
The quantity $T' \le T$ corresponds to an effective time that is discounted
by the worst-case mixing behaviour of the model, and is closely
related to the mixing times of Markov chains \cite[Proposition 3.4]{Paulin_2015}.
The quantity $M T'$ then corresponds to the number of well-mixed samples.
We also note that
$T' \asymp T$ in well-mixing regimes with $\gamin \asymp 1$.
}

The following theorem presents a generic error bound for estimating the transition matrix $P$
using the empirical transition matrix $\hat P$ defined in \eqref{eq:EmpiricalTransitionMatrix}.

\begin{theorem}\margnote{Statement streamlined}
\label{the:TransitionMatrixError}
\newtext{
There exist universal constants \(C_1, C_2, C_3 > 0\) such that for any \(0 < \eps \le 1\),
the transition matrix estimator satisfies
\begin{equation}
 \label{eq:TransitionMatrixError}
 \supnorm{\hhP - P}
 \wle C_1 \sqrt{ \frac{\abs{\Omega} \log(4\abs{\Omega}/\eps)}{\barpimin MT} }
 + C_2 \min \left\{ \frac{\De_1}{\barpimin}, \, \De_\infty \right\}
\end{equation}
with probability at least $1-\eps$,
provided the effective sample size satisfies
\begin{equation}
 \label{eq:TransitionMatrixErrorLargeSample}
 MT' \wge C_3 \frac{\log(4\abs{\Omega}/\eps) + M \eta}{\barpimin}.
\end{equation}
}
\end{theorem}

Furthermore, the following theorem presents an error bound for the empirical distribution $\hat \pi$ defined in \eqref{eq:EmpiricalDistribution}.

\begin{theorem}\margnote{Statement streamlined}
\label{the:StationaryDistributionError}
\newtext{
There exists a universal constant $C_1 > 0$
such that for any $0 < \eps \le 1$,
the stationary distribution estimator satisfies
\begin{equation}
 \label{eq:StationaryDistributionError}
 \supnorm{\hat\pi - \bar\pi}
 \wle C_1 \sqrt{ \frac{\log (2\abs{\Omega}/\eps) + M\eta}{MT'} }
\end{equation}
with probability at least $1-\eps$.
}
\end{theorem}

\begin{example}[Clean data]
\newtext{
Consider a data matrix $(X_{m,t})$ in which the rows are
mutually independent, irreducible and aperiodic, stationary Markov chains with
transition matrix $P$,
stationary distribution $\pi$,
and pseudo-spectral gap $\ga$.
Then $\barpi = \pi$, and the deviation terms 
$\De_1, \De_\infty, \eta$ vanish.
Then the bounds \eqref{eq:TransitionMatrixError} and \eqref{eq:StationaryDistributionError}
become\margnote{Updated to match new theorem formulation}
\[
 \supnorm{\hhP - P}
 \wle C_1 \sqrt{ \frac{\abs{\Omega} \log(4\abs{\Omega}/\eps)}{\pimin MT} }
 \qquad \text{and} \qquad
 \supnorm{\hat \pi - \pi}
 \wle C_1 \sqrt{ \frac{\log (2\abs{\Omega}/\eps)}{MT'} },
\]
and they are valid, 
provided the effective sample size satisfies
$
 MT' \ge C_3 \frac{\log(4\abs{\Omega}/\eps)}{\pimin}.
$
}
\end{example}

\newtext{
By combining~\eqref{eq:StationaryDistributionError} with the triangle inequality,
we see that the estimation error for the distribution $\pi$ is bounded by
\begin{equation}
 \label{eq:StationaryDistributionErrorTriangle}
 \supnorm{\hat\pi - \pi}
 \wle C_1 \sqrt{ \frac{\log (2\abs{\Omega}/\eps) + M\eta}{MT'} } + \supnorm{\barpi - \pi}.
\end{equation}
}

\begin{example}[\newtext{Mixture of correct and misspecified chains}]\margnote{Descriptive name added}
Assume that each row of the data matrix corresponds to a stationary Markov chain with transition matrix
and stationary distribution
\[
 P_m \weq
 \begin{cases}
 P, & m \in \mathcal{M}_0, \\
 Q, & m \in \mathcal{M}_1,
 \end{cases}
 \qquad\text{and}\qquad
 \pi_m \weq
 \begin{cases}
 \pi, & m \in \mathcal{M}_0, \\
 \rho, & m \in \mathcal{M}_1,
 \end{cases}
\]
where \(P\) denotes the correct transition matrix,
and \(Q\) is a misspecified (perturbed) transition matrix.
\newtext{
In this case
$\De_1 = \frac{M_1}{M} \supnorm{P-Q}$, 
$\De_\infty = \supnorm{P-Q}$,
and
$\supnorm{\bar\pi - \pi} = \frac{M_1}{M} \supnorm{\pi-\rho}$,
where $M_1 = |\mathcal{M}_1|$ is the number of perturbed chains.
Then the bounds \eqref{eq:TransitionMatrixError} and \eqref{eq:StationaryDistributionErrorTriangle}
become\margnote{Updated to match new theorem formulation}
\[
 \supnorm{\hhP - P}
 \wle C_1 \sqrt{ \frac{\abs{\Omega} \log(4\abs{\Omega}/\eps)}{\barpimin MT} }
 + C_2 \min \left\{ \frac{M_1/M}{\barpimin}, \, 1 \right\} \supnorm{P-Q}
\]
and
\[
 \supnorm{\hat \pi - \pi}
 \wle C_1 \sqrt{ \frac{\log(2\abs{\Omega}/\eps)}{MT'} }
 + \frac{M_1}{M} \supnorm{\pi-\rho}.
\]
}
\end{example}

\subsection{Robust estimation under fully corrupted rows}
\label{sec:FullyCorrupted}

Let us consider a setting in which $M_0$ rows of the data matrix $(X_{m,t})$ are Markov chains
as before, and the remaining $M_1 = M-M_0$ rows are arbitrary, considered as fully corrupted data.
When we do not know which of the rows are corrupted, a natural estimate of $P$ is still
the empirical transition matrix $\hat P$ of the full data matrix given by \eqref{eq:EmpiricalTransitionMatrix}.
When $M_1/M$ is small, we expect that $\hat P$ still provides a reasonably accurate estimate of $P$.
The following result quantifies when this is the case.

As before, we assume that among the $M_0$ rows,
each row $m$ is an irreducible and aperiodic Markov chain with initial distribution $\mu_m$,
transition matrix $P_m$,
stationary distribution $\pi_m$, and pseudo-spectral gap $\ga_m$.
We define \newtext{
$\De_1^{(0)}, \De_\infty^{(0)}, \barpimin^{(0)}$
}
as in \eqref{eq:Deltas},\margnote{superscript $( \cdot )^{(0)}$ added}
and \newtext{$\eta^{(0)}$} as in \eqref{eq:RenyiTwo}, 
with the averages and maxima taken over the $M_0$ uncorrupted rows.

\newtext{
\begin{theorem}\margnote{Statement streamlined}
\label{the:TransitionMatrixErrorCorrupted}
There exist universal constants $C_1,C_2,C_3,C_4 > 0$ such that
for any $0 < \eps \le 1$,
the transition matrix estimator satisfies
\begin{equation}
 \label{eq:TransitionMatrixErrorFullyCorruptedNew}
 \supnorm{\hat P - P}
 \wle C_1 \sqrt{ \frac{\abs{\Omega} \log(8\abs{\Omega}/\eps)}{\barpimin^{(0)} M_0 T} }
 + C_2 \min \left\{ \frac{\De_1^{(0)}}{\barpimin^{(0)}}, \, \De_\infty^{(0)} \right\}
 + C_3  \frac{M_1/M}{\barpimin^{(0)}}
\end{equation}
with probability at least $1-\eps$, whenever
\begin{equation}
 \label{eq:CondTransitionMatrixErrorFullyCorruptedNew}
 M_0 T' \wge C_4 \frac{\log(8\abs{\Omega}/\eps) + M_0 \eta^{(0)}}{\big(\barpimin^{(0)}\big)^2}.
\end{equation}
\end{theorem}
}

\begin{remark}
\newtext{
The proofs of the main results yield explicit admissible constants.
One may take\margnote{New remark clarifying the role of explicit constants.}
$(C_1,C_2,C_3) = (7,2,144)$ in Theorem~\ref{the:TransitionMatrixError},
$C_1= \sqrt{56}$ in Theorem~\ref{the:StationaryDistributionError},
and $(C_1,C_2,C_3,C_4) = (7,2,4,144)$ in Theorem~\ref{the:TransitionMatrixErrorCorrupted}.
We do not expect the explicit admissible constants to be sharp,
although we do expect the asymptotic rates with respect to $M,T'$ to be optimal.
}
\end{remark}

\subsection{Consistency in large-scale regimes}
\label{sec:Consistency}

\newtext{\margnote{Old Section 2.3 merged here}
To study asymptotic consistency, we introduce a scale parameter $\nu \to \infty$
and allow all model parameters and dimensions to depend on $\nu$,
including the size and shape of the state space $\Omega$,
the number and length of chains $M,T$,
the transition matrices $P, P_m$,
and the distributions $\pi, \pi_m, \mu_m$.
For positive sequences $a = a_\nu$ and $b=b_\nu$
indexed by $\nu$, we write
$a \ll b$ to mean $\lim_{\nu \to \infty} a_\nu / b_\nu = 0$, and
$a \lesim b$ to mean $\limsup_{\nu \to \infty} a_\nu / b_\nu < \infty$.
We also write $a \asymp b$ to indicate that $a \lesim b$ and $b \lesim a$.
For notational simplicity, we omit explicit dependence on $\nu$.

This scaling framework is significant because it encompasses a broad spectrum of asymptotic regimes, including large ensembles ($M \to \infty$) of long chains ($T \to \infty$), growing state spaces ($|\Omega| \to \infty$), and sparse models where $\pi(i) \to 0$ or $P(i,j) \to 0$ for most $i,j$. These regimes are relevant for understanding the behaviour of the estimators in both high-dimensional and heterogeneous settings.
}

As immediate corollaries of Theorems~\ref{the:TransitionMatrixError} and \ref{the:StationaryDistributionError}, we obtain sufficient conditions for $\hat P$ and $\hat \pi$ to be consistent estimators of $P$ and $\pi$,
recalling the definition of the effective time $T'$ in \eqref{eq:TPrime}.

\begin{corollary}[Transition matrix consistency]
\label{the:TransitionMatrixConsistency}
If 
\[
MT' \gg \frac{|\Omega| \log |\Omega|}{\barpimin}, \qquad 
T' \gg \frac{\eta}{\barpimin}, \qquad 
\min\Big\{ \frac{\Delta_1}{\barpimin}, \Delta_\infty \Big\} \ll 1,
\]
\newtext{
then $\supnorm{\hat P - P} \to 0$ in probability as $\nu \to \infty$.\margnote{Updated statement, less technical}
}
\end{corollary}

\begin{corollary}[Stationary distribution consistency]
\label{the:StationaryDistributionConsistency}
If
\[
MT' \gg \log |\Omega|, \qquad
T' \gg \eta, \qquad
\supnorm{\bar\pi - \pi} \ll 1,
\]
\newtext{
then $\supnorm{\hat \pi - \pi} \to 0$ in probability as $\nu \to \infty$.\margnote{Updated statement, less technical}
}
\end{corollary}

In a stationary setting ($\eta = 0$) with bounded state space ($|\Omega| \asymp 1$), Corollary~\ref{the:TransitionMatrixConsistency} requires
$\barpimin MT' \gg 1$ and $\min\{\Delta_1/\barpimin, \Delta_\infty\} \ll 1$.
A lower bound on $\barpimin$ is needed because estimating $P_{i,:}$ becomes difficult if some states are rarely visited. In contrast, Corollary~\ref{the:StationaryDistributionConsistency} only requires $MT' \gg 1$ and $\supnorm{\bar\pi - \pi} \ll 1$, so no lower bound on $\barpimin$ is needed: rare states are naturally estimated close to zero.

In balanced regimes with $\barpimin \asymp 1$, $\min\{\Delta_1/\barpimin, \Delta_\infty\} \asymp \Delta_1$. In contrast, in sparse regimes with $\barpimin \ll 1$, it may happen that $\Delta_\infty \ll 1 \lesssim \Delta_1 / \barpimin$,
as shown in the following example.

\begin{example}[\newtext{Random walk on the complete graph}]\margnote{Descriptive name added}
\label{exa:CompleteGraph}
Let $\Omega = \{1,\dots,n\}$ for $n \ge 3$, with 
\[
P(i,j) = \frac{1}{n}, \qquad P_m(i,j) = \frac{1(j \ne i)}{n-1}.
\]
Then $P$ and $P_m$ are the transition matrices of symmetric random walks on the complete graph
where $P$ allows self-loops but $P_m$ forbids them.
Both $P$ and $P_m$ are irreducible and aperiodic,
with uniform stationary distribution $\pi_m = \pi$.
A simple computation gives $\supnorm{P_m - P} = 2/n$, $\barpimin = 1/n$,
and $\Delta_1 = \Delta_\infty = 2/n$. Hence, $\Delta_\infty \ll 1 \asymp \Delta_1 / \barpimin$ as $n \to \infty$.
\end{example}

\section{\newtext{Numerical experiments}}
\label{sec:NumericalExperiments}

We evaluate the\margnote{New section}
empirical transition matrix estimator through controlled simulations, validating theoretical error bounds and illustrating the trade-offs in ensemble-based estimation.

\paragraph{Experimental setup.}
Consider a lazy random walk on a cycle of $\abs{\Omega}$ vertices. From each vertex $i$, the chain moves to either neighbour with probability $\ga/2$ and stays at $i$ with probability $1-\ga$.
The transition matrix of this reversible Markov chain is
\[
 P(i,i\pm 1 \bmod \abs{\Omega})
 \weq \frac{\ga}{2}, \qquad P(i,i) = 1-\ga.
\]
Standard computations \cite[Section 12.3]{Levin_Peres_2017_Markov}
show that for any $\ga \le \frac12$,
the pseudo-spectral gap of $P$ equals
$\ga_{\rm ps} = 1 - (1 - \ga_{\rm abs})^2$,
where $\ga_{\rm abs} = \ga(1-\cos(2\pi /\abs{\Omega}))$ is the absolute spectral gap.
Then $\ga_{\rm ps} = \frac{4\pi^2}{\abs{\Omega}^2} \ga + O\big(\frac{1}{\abs{\Omega}^4} \big)$ for $\abs{\Omega} \gg 1$.

To assess robustness under model heterogeneity, we generate $M$ perturbed transition matrices, each obtained by adding i.i.d.\ uniform noise in $(-\eps, \eps)$ to the entries of $P$, truncating negative values to zero, and renormalising rows to ensure stochasticity.
Alternative perturbation schemes (e.g., noise applied only to diagonal entries or to a subset of elements per row) yielded qualitatively similar results and are omitted for brevity.

\paragraph{Trade-off between chain count and trajectory length.}

In the first experiment (Figure~\ref{fig:VaryingChainCountStateCount}, left)
we fix the total number of observations at $MT=10^4$ and vary how they are split between
the number of chains $M$ and their length $T$.
In the absence of noise, the estimation error remains roughly constant.
With noise, the error decreases as the number of chains increases up to about 100, then levels off.
When $M$ is small, the mean deviation $\Delta_1$ fluctuates considerably, as a few perturbed chains dominate the average, preventing concentration around the target $P$.
Averaging over more trajectories reduces this variability, improving estimator accuracy, in line with our theoretical predictions.

\begin{figure}[!h]
\centering
\includegraphics[height=55mm, trim=0 4mm 0 0, clip]
{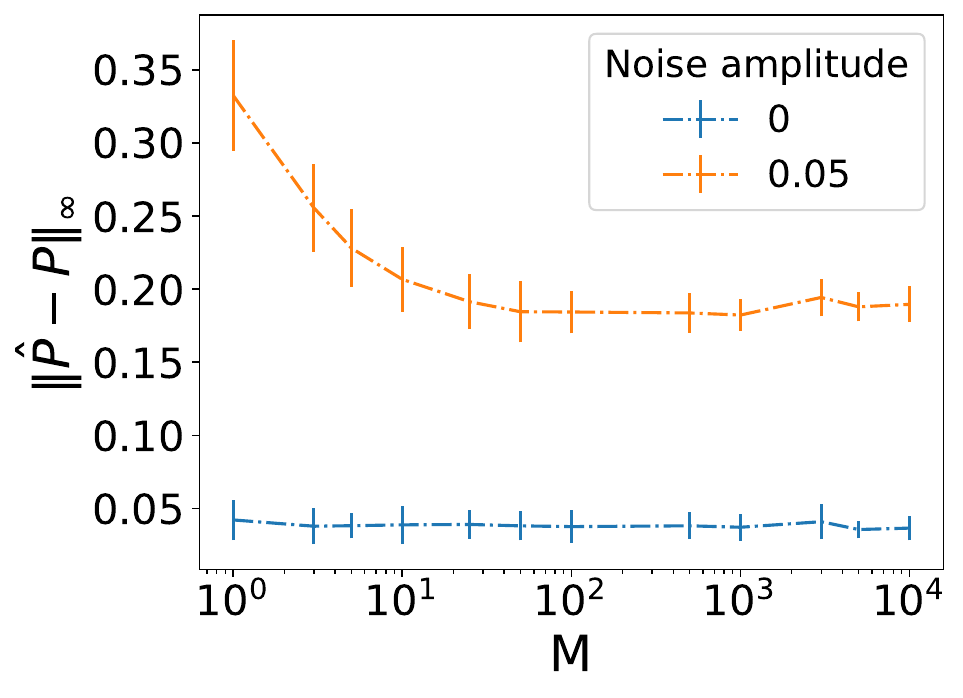}
\hspace{2mm}
\includegraphics[height=55mm, trim=0 4mm 0mm 0, clip]
{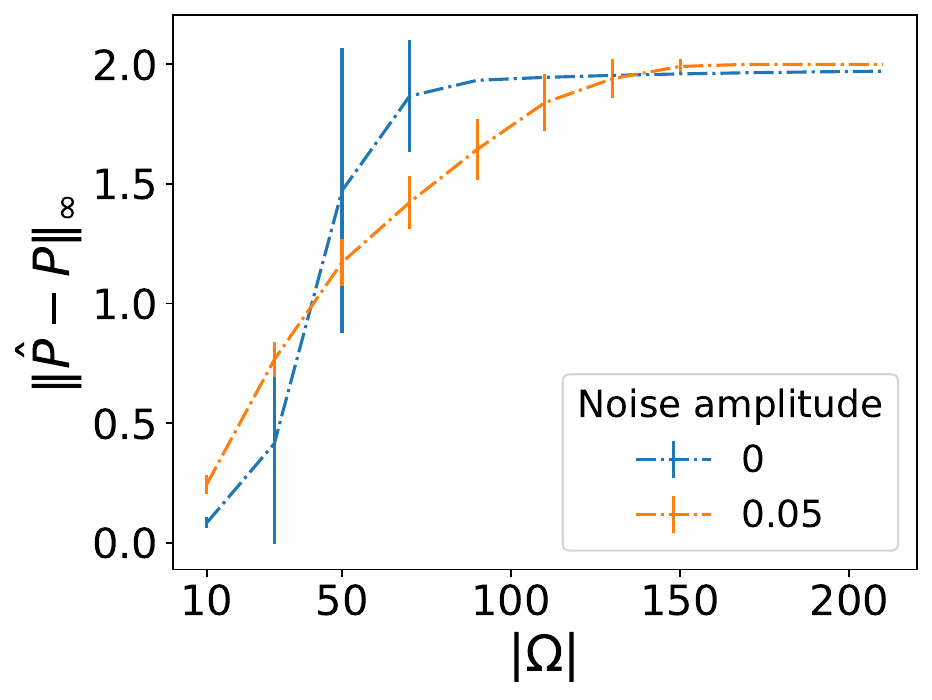}
\caption{
Transition matrix estimation error for
varying chain count $M$ (left) with respect to fixed total sample count $MT=10^4$ 
and state count $\abs{\Omega}=10$; and
varying state count $\abs{\Omega}$ (right) with $M=50$ and $T=50$.
Results are averaged over 50 simulations, and
error bars display standard deviations for
two noise levels ($\eps=0$ and $\eps=0.05$) with $\ga=0.1$.
\label{fig:VaryingChainCountStateCount}
}
\end{figure}

\paragraph{Effect of state count.}
Figure~\ref{fig:VaryingChainCountStateCount} (right) examines the influence of the state space size $\abs{\Omega}$ on the estimation error. With $M = 50$, $T = 50$, and $\gamma = 0.1$ fixed, the error initially increases with $\abs{\Omega}$,
then plateaus at its theoretical upper bound of 2.
This bound follows from the fact that both $P$ and $\hat{P}$ are stochastic matrices, so
$\supnorm{\hat{P} - P} \le 2$ by the triangle inequality.
This behaviour is consistent with Theorem~\ref{the:TransitionMatrixError}, which predicts that,
for fixed total observations $MT$,
the estimation error grows with the square root of the state count $\abs{\Omega}$.

\paragraph{Influence of spectral gap.}
Figure~\ref{fig:VaryingGap} shows how the jump rate $\gamma$, which controls the spectral gap, affects estimation error for $|\Omega| = 10$, $M = 200$, and $T = 200$. When $\gamma$ is small, the chain mixes slowly and successive samples are highly correlated, increasing the error of the stationary distribution in both noiseless and noisy regimes.
For the transition-matrix estimator, the behaviour is subtler. For $\gamma$ close to 0, the transition matrix is nearly the identity; a single long trajectory provides no additional information, so accurate estimation relies on multiple independent chains. Even small perturbations then produce large deviations in the noisy setting. As $\gamma$ increases, the dynamics become more complex, and for $\gamma$ close to 1 the chain approaches periodic behaviour, also leading to larger errors. In the noiseless regime, estimation errors grow moderately with $\gamma$, remaining small overall. In the noisy regime, the smallest errors occur when $\gamma$ and the perturbation amplitude $\eps$ are comparable, while errors increase when $\gamma$ is very small or close to 1.

\begin{figure}[!h]
\centering
\includegraphics[height=53mm, trim=0 3mm 0 0, clip]
{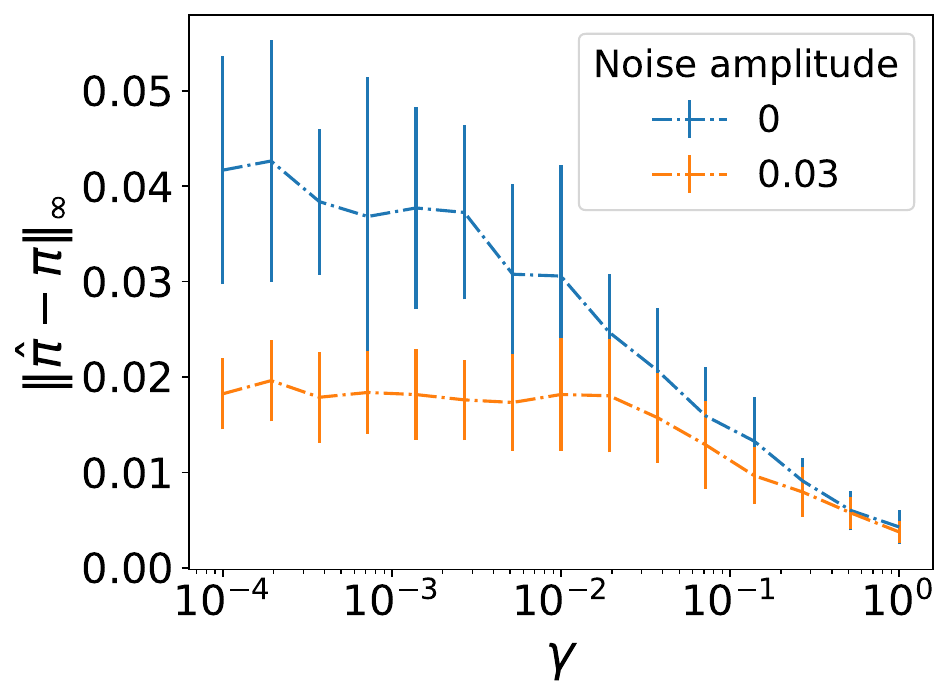}
\hspace{6mm}
\includegraphics[height=53mm, trim=0 3mm 0 0, clip]
{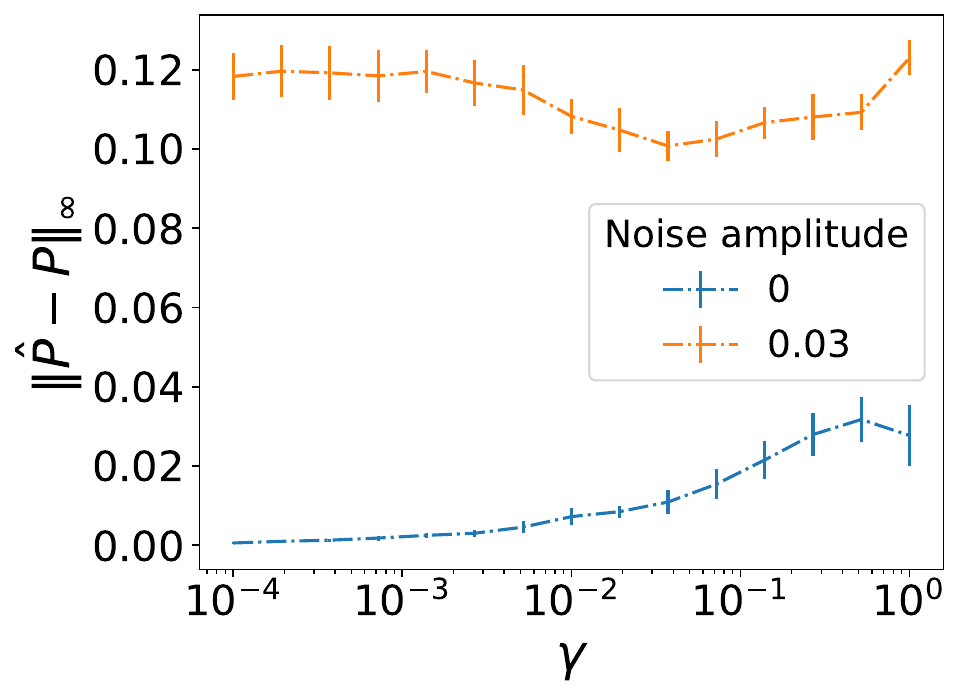}
\caption{
Stationary distribution (left) and transition matrix (right) estimation error for
varying transition rate parameter $\ga$,
with 
$|\Omega| = 10$, $M=200$, $T=200$.
Results are averaged over 50 simulations, and
error bars display standard deviations for
two noise levels ($\eps=0$ and $\eps=0.03$).
\label{fig:VaryingGap}
}
\end{figure}

\section{Conclusions}
\label{sec:Conclusions}

This article established consistency results and explicit nonasymptotic error bounds for estimating the transition matrix of a Markov chain from multiple parallel sample paths. We showed that a simple estimator—the empirical transition matrix—is consistent under mild assumptions on the chains’ mixing properties. This aligns with known results for estimation from a single sample path \citep{Huang_Li_2025,Wolfer_Kontorovich_2019,Wolfer_Kontorovich_2021}.

A key advance of this work is the ability to rigorously quantify the trade-off between the number of observed paths ($M$) and their length ($T$), thereby generalizing classical settings. In addition, we developed a robust estimation framework that accounts for perturbations in the dynamics and allows for the presence of fully corrupted sample paths.

Several important directions remain open for future research. These include extending the analysis to continuous-time Markov processes, e.g.\ by following the ideas in \citep{Huang_Li_2024+}, and to positive recurrent chains on countably infinite state spaces. Robust parameter estimation in such settings would be valuable for applications such as performance analysis of queueing systems and forecasting in epidemiological models. Furthermore, these developments could provide a foundation for tackling more complex scenarios involving partially observed Markov chains, commonly encountered for example in distributed queueing systems \citep{Leskela_2006,Leskela_Resing_2007} and stochastic epidemic models \citep{Britton_2020,Diekmann_Heesterbeek_Britton_2013}.

%
%

\section{Proofs}
\label{sec:ProofAppendix}

\newtext{This appendix is organised as follows.\margnote{New structure (with all proofs in appendix)}
Section \ref{sec:Notation} introduces technical notation.
Section \ref{sec:MarkovConcentration} provides
concentration inequalities for Markov chain ensembles.
Section \ref{sec:Proofs} contains the proofs of the main results.
Section \ref{sec:Technical} contains auxiliary technical results.
}

\subsection{Notation}
\label{sec:Notation}

Functions $\mu \colon \Ome \to \R$ are identified as vectors $\mu \in \R^{\abs{\Ome}}$
and functions $P \colon \Ome \times \Ome \to \R$ as matrices $P \in \R^{\abs{\Ome} \times \abs{\Ome}}$.
Notations $\mu_i = \mu(i)$ and $P_{i,j} = P(i,j)$ are used interchangeably as dictated by notational clarity and convenience.
Standard vectors norms are denoted by
$\onenorm{\mu} = \sum_i \abs{\mu_i}$,
$\twonorm{\mu} = (\sum_i \mu_i^2)^{1/2}$,
and
$\supnorm{\mu} = \max_i \abs{\mu_i}$.
For matrices, $\norm{P}_2$ denotes the spectral norm, and
$\norm{P}_\infty = \max_i \norm{P_{i,:}}_1$ the maximum row sum norm;
these are the operator norms induced by the $\ell_2$ and $\ell_\infty$ distances on $\R^{\abs{\Ome}}$, respectively \citep{Horn_Johnson_2013}.
For stochastic matrices, $\norm{P-Q}_\infty = 2 \max_i d_{\rm tv}(P_{i,:}, Q_{i,:})$
equals twice the largest rowwise total variation distance.

Standard notations concerning Markov chains are used \citep{Levin_Peres_2017_Markov}.
A vector $\mu \in \R^{\abs{\Ome}}$ is called \emph{stochastic}
if $\mu_i \ge 0$ for all $i$ and $\sum_i \mu_i = 1$.
A matrix $P \in \R^{\abs{\Ome} \times \abs{\Ome}}$ is called
\emph{stochastic} if its rows are stochastic.
A stochastic matrix is called \emph{irreducible} if for any $i,j$ there exists an integer
$k \ge 1$ such that $(P^k)_{i,j} > 0$.
A stochastic vector $\pi$ is \emph{stationary} for $P$ if $\pi P = \pi$.
Any irreducible stochastic matrix $P$ admits a unique stationary vector $\pi$.
The \emph{time reversal} of such matrix is defined by $P^*_{i,j} = \frac{\pi_j}{\pi_i} P_{j,i}$. 
An irreducible stochastic matrix is \emph{reversible} if $P^* = P$.
The \emph{spectral gap} of an irreducible reversible stochastic matrix $P$
is defined by $\garev(P) = 1-\la_2(P)$ where $\la_2(P)$ denotes the
second largest eigenvalue of $P$.
The \emph{pseudo-spectral gap} of an irreducible stochastic matrix $P$ is defined by
\[
 \ga(P)
 \weq \sup_{k \ge 1} \frac{\garev(P^{*k} P^k)}{k}.
\]
See \cite{Paulin_2015} for details, and \cite{Huang_Li_2025} for alternative notions of spectral gaps
for nonreversible stochastic matrices.


\subsection{Concentration of Markov chain ensembles}
\label{sec:MarkovConcentration}

In this section we derive concentration inequalities for empirical state frequencies
(Proposition~\ref{the:StateFrequenciesInd}) and empirical transition frequencies (Proposition~\ref{the:TransitionFrequencies})
of Markov chain ensembles that are key to proving the main results of Section~\ref{sec:MainResults}.
The starting point is a Paulin--Bernstein concentration inequality (Theorem~\ref{the:StateFrequencies})
for ensemble-time averages of Markov chains that is proved in Section~\ref{sec:PaulinParallelMarkov}.
The proof of Proposition~\ref{the:StateFrequenciesInd} is then given in Section~\ref{sec:ConcentrationStateFrequencies},
and the proof of Proposition~\ref{the:TransitionFrequencies} in 
Section~\ref{sec:ConcentrationTransitionFrequencies}.

\subsubsection{Concentration of ensemble-time averages}
\label{sec:PaulinParallelMarkov}

We will generalise a Paulin--Bernstein concentration inequality \cite[Theorem 3.4, Proposition 3.10]{Paulin_2015}
for the standard time average
$\frac{1}{T} \sum_{t=1}^T f(X_{t-1})$ of a single Markov chain
to the ensemble-time average 
$\frac{1}{MT} \sum_{m=1}^M \sum_{t=1}^T f(X_{m,t-1})$
of multiple independent Markov chains.

\begin{remark}
\label{rem:PaulinVector}
A simple approach for deriving a concentration bound is to represent the ensemble-time average
as an ordinary time average of the $\Ome^M$-valued Markov chain $(X_{:,t})$ 
according to
\[
 \frac{1}{MT} \sum_{m=1}^M \sum_{t=1}^T f(X_{m,t-1})
 \weq \frac{1}{T} \sum_{t=1}^T F(X_{:,t-1}),
\]
where $F(x_1,\dots, x_M) = \sum_{m=1}^M f_m(x_m)$, and then apply
\cite[Theorem 3.4]{Paulin_2015}.
Such a direct approach does not lead into a good concentration inequality
for large ensembles with $M \gg 1$.
The reason is that, even though the variance $\Var_\Pi(F) = \sum_{m=1}^M \Var_{\pi_m}(f_m)$
for $\Pi = \bigotimes_m \pi_m$ being the stationary distribution of $(X_{:,t})$
scales well with $M$, the same is not in general true for the term $\Delta_F = \max_{x \in \Ome^M} \abs{F(x) - \Pi(F)}$.
In particular, for indicator functions $f_m = 1_{\{i\}}$, we see that $\Delta_F$ is of order $M$
even though $\max_x \abs{f_m(x) - \pi_m(f_m)} \le 1$.
\end{remark}

Instead of the aforementioned simplistic approach, we will refine the analysis and
modify the proof of \cite[Theorem 3.4]{Paulin_2015}
to properly account for the smoothing effect due to averaging over multiple independent Markov chains.
As a result, we obtain the following sharp concentration inequality.

\begin{theorem}
\label{the:StateFrequencies}
For any functions $f_1,\dots, f_M \colon \Ome \to \R$
and for any $s > 0$,
the ensemble-time average
$S = \frac{1}{MT} \sum_{m=1}^M \sum_{t=1}^T (f_m(X_{m,t-1}) - \pi_m(f_m))$ satisfies
\begin{equation}
 \label{eq:StateFrequenciesF}
 \pr( S \ge s)
 \wle \exp \bigg( - \frac{\gamin MT s^2}{16 (1+1/(\gamin T)) V + 40 \De s} + \frac12 M \eta \bigg),
\end{equation}
where $\De = \max_m \max_x \abs{f_m(x) - \pi_m(f_m)}$ and
$V = \frac{1}{M} \sum_m \sum_x ( f_m(x) - \pi_m(f_m))^2 \pi_m(x)$.
\end{theorem}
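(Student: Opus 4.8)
The plan is to adapt the proof of \cite[Theorem 3.4]{Paulin_2015} at the level of a single chain, but to exploit independence across the ensemble index $m$ so that the exponential moment factorises. First I would reduce the nonstationary case to the stationary one: for each $m$, the Rényi divergence bound (Lemma~\ref{the:RenyiBound}, or its Markov version Lemma~\ref{the:NonstationaryMC}) lets me compare the law of the $m$-th path started from $\mu_m$ with the law started from $\pi_m$, at the cost of a multiplicative factor controlled by $D_2(\mu_m\|\pi_m)$. Since the paths are independent, these factors multiply, producing the aggregate $\frac12 M\eta$ term in the exponent; so from here on I may assume each path is stationary.

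Next, working in the stationary regime, I would bound the log-moment generating function of $S$. By independence across $m$,
\[
 \log \E e^{\theta MT S}
 \weq \sum_{m=1}^M \log \E \exp\Bigl(\theta \textstyle\sum_{t=1}^T (f_m(X_{m,t-1}) - \pi_m(f_m))\Bigr),
\]
so it suffices to control each single-chain term. For this I would invoke the single-chain Paulin--Bernstein moment bound: for a stationary Markov chain with pseudo-spectral gap $\ga_m$, the cumulant of $\sum_{t=1}^T (f_m(X_{m,t-1})-\pi_m(f_m))$ is at most a Bernstein-type expression in $\theta$ involving $T$, the variance $\sigma_m^2 = \sum_x (f_m(x)-\pi_m(f_m))^2 \pi_m(x)$, the envelope $\De_m = \max_x |f_m(x)-\pi_m(f_m)|$, and $\ga_m$. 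Summing over $m$ and bounding $\ga_m \ge \gamin$, $\De_m \le \De$, and $\frac1M\sum_m \sigma_m^2 = V$ collapses the bound into a single Bernstein cumulant with "variance proxy" of order $(1+1/(\gamin T)) MT V / \gamin$ and "range proxy" of order $\De / \gamin$. A standard Chernoff optimisation over $\theta$ then yields \eqref{eq:StateFrequenciesF}, with the explicit constants $16$ and $40$ coming from the constants in Paulin's single-chain estimate together with the Bernstein optimisation.

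The main obstacle — and the reason the naive vectorised approach of Remark~\ref{rem:PaulinVector} fails — is obtaining the single-chain cumulant bound in a form whose $\De_m$-dependence is through the individual envelope $\max_x|f_m(x)-\pi_m(f_m)|$ rather than through a quantity that grows with $M$. Concretely, I expect I must re-run Paulin's argument (splitting the time average into blocks, or using the spectral/functional-analytic estimate underlying \cite[Theorem 3.4]{Paulin_2015}) carefully enough that the per-chain range parameter never aggregates; only after the cumulants are summed and $\theta$ is optimised do the $M$'s appear, and they appear multiplying $V$ (which scales correctly) rather than $\De$. Tracking the $1+1/(\gamin T)$ correction factor — which accounts for the discrepancy between $T$ steps and $T'$ effective well-mixed samples when $\gamin T$ is small — through the block decomposition is the delicate bookkeeping step, but it is routine given Paulin's machinery. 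Finally I would note that the same bound applied to $-f_m$ gives the lower-tail statement, though only the upper tail is asserted here.
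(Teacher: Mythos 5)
Your proposal follows essentially the same route as the paper's proof: reduce the nonstationary case to the stationary one via the tensorised R\'enyi change-of-measure bound (Lemma~\ref{the:NonstationaryMC}), factorise the exponential moment of $MTS$ over the independent rows, apply Paulin's single-chain moment-generating-function bound to each chain after rescaling by the individual envelope $\De_m$ (no genuine re-running of Paulin's block argument is needed, only this rescaling), and then carry out an explicit Chernoff optimisation with $\ga_m \ge \gamin$, $\De_m \le \De$, $\frac1M\sum_m \sigma_m^2 = V$. The one bookkeeping point to fix is the provenance of the constants: the stationary analysis yields $8$ and $20$, and the change of measure is not purely multiplicative but gives $\pr(S\ge s) \le e^{\frac12 M\eta}\,\pr_\pi(S\ge s)^{1/2}$, and it is this square root that doubles the constants to the stated $16$ and $40$.
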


\begin{proof}
Let us first prove \eqref{eq:StateFrequenciesF} in the stationary case in which
the rows $X_{m,:}$ are stationary Markov chains.  In this case we denote
the law of the random matrix $(X_{m,t})$ by $\pr_\pi$.
Denote
$\De_m = \max_x \abs{f_m(x)-\pi_m(f_m)}$
and
$\Var_{\pi_m}(f_m) = \sum_x (f_m(x)-\pi_m(f_m))^2 \pi_m(x)$,
and define
\[
 g_m(x)
 \weq
 \begin{cases}
 \frac{f_m(x) - \pi_m(f_m)}{\De_m}, &\quad \De_m \ne 0, \\
 0, &\quad \text{otherwise}.
 \end{cases}
\]
Then $S = \frac{1}{MT} \sum_m \De_m S_m$, where
$
 S_m
 = \sum_{t=1}^ T g_m(X_{m,t}).
$

Fix an integer $m$ such that $\De_m > 0$.
We note that $\pi_m(g_m) = 0$ and $\supnorm{g_m} = 1$.
The last inequality in the proof of \cite[Theorem 3.4]{Paulin_2015} then implies that
\[
 \E_\pi e^{\theta S_m} \le e^{\psi_m(\theta)}
 \qquad \text{for all $0 < \theta < \ga_m/10$},
\]
where
\[
 \psi_m(\theta)
 \weq 2 \Var_{\pi_m}(g_m) \frac{T+1/\ga_m}{\ga_m} \frac{\theta^2}{1 - \frac{10 \theta}{\ga_m}}
 \weq 2 \Var_{\pi_m}(f_m) \frac{T+1/\ga_m}{\De_m^2 \ga_m} \frac{\theta^2}{1 - \frac{10 \theta}{\ga_m}}.
\]
Then
\[
 \E_\pi e^{\theta \De_m S_m} \le e^{\psi_m(\De_m \theta)}
 \qquad \text{for all $0 < \theta < \frac{\ga_m}{10 \De_m}$}.
\]

Denote $\eps = \min_{m: \De_m > 0} \frac{\ga_m}{\De_m}$.
Then for any $0 < \theta < \eps/10$, we find that
\[
 \E_\pi e^{\theta MT S}
 \weq \prod_{m: \De_m > 0} \E_\pi e^{\theta \De_m S_m}
 \wle e^{\sum_{m: \De_m > 0} \psi_m(\De_m \theta)}.
\]
Here
\begin{align*}
 \sum_{m: \De_m > 0} \psi_m(\De_m \theta)
 &\weq 2 \sum_{m: \De_m > 0} \Var_{\pi_m}(f_m) \frac{T+1/\ga_m}{\De_m^2 \ga_m}
  \frac{\De_m^2 \theta^2}{1 - \frac{10 \De_m \theta}{\ga_m}} \\
 &\wle 2 \underbrace{\sum_m \Var_{\pi_m}(f_m) \frac{T+1/\ga_m}{\ga_m}}_{\al}
  \frac{\theta^2}{1 - \frac{10 \theta}{\eps}}.
\end{align*}
Hence by Markov's inequality, for any $0 < \theta < \eps/10$ and all $s \ge 0$,
\begin{equation}
 \label{eq:PaulinNormalisedTensor0Gen}
 \pr_\pi( S \ge s/T )
 \weq \pr_\pi( MTS \ge M s )
 \wle e^{-\theta Ms} \E_\pi e^{\theta MTS}
 \wle \exp \bigg( M (\psi(\theta) - \theta s) \bigg),
\end{equation}
where
$
 \psi(\theta)
 = 2 \frac{\al}{M} \frac{\theta^2}{1 - \frac{10 \theta}{\eps}}.
$
To minimise the right side in \eqref{eq:PaulinNormalisedTensor0Gen}, we note that
\[
 \psi(\theta) - \theta s
 \weq \frac{\al \eps^2}{50 M} \left( \frac{\eta^2}{1 - \eta} - s_1 \eta \right),
\]
where $\eta = \frac{10\theta}{\eps}$ and
$s_1 = 5 \frac{Ms}{\al \eps}$.
The function $\psi_1(\eta) = \frac{\eta^2}{1 - \eta} - s_1 \eta$
has derivatives
$\psi_1'(\eta) = \frac{2\eta - \eta^2}{(1-\eta)^2} - s_1$
and
$\psi_1''(\eta) = \frac{2}{(1-\eta)^3}$.
Hence $\psi_1$ is strictly convex on $(0,1)$
and attains its minimum at $\eta = 1 - \frac{1}{\sqrt{1+s_1}}$.
As a consequence, $\psi(\theta) - \theta s$ is minimised at
$\theta = \frac{\eps \eta}{10}$.
A simple computation shows that the minimum value equals
\begin{align*}
 \psi(\theta) - \theta s
 &\weq - \frac{\al \eps^2}{50 M} \left( \sqrt{1+s_1} -1 \right)^2.
\end{align*}

To simplify the above expression, we apply the inequality\footnote{This can be checked by squaring things so that we see a difference between 4th order polynomials.
This gives a shaper bound that the simple Taylor bound 
$(1+x)^{1/2} - 1 = \int_0^x \frac12 (1+s)^{-1/2} ds \ge \frac12 \frac{x}{(1+x)^{1/2}}$.}
$\sqrt{1+x} - 1 \ge \frac{x/2}{\sqrt{1+x/2}}$
which implies that $( (1+x)^{1/2} - 1)^2 \ge \frac{x^2/4}{{1+x/2}}$.
We conclude that
\begin{align*}
 \psi(\theta) - \theta s
 \wle - \frac{\al \eps^2}{50 M} \frac{s_1^2/4}{{1+s_1/2}}
 \weq - \frac{ \eps s^2}{8 \frac{\al \eps}{M} + 20 s}.
\end{align*}
By plugging this into \eqref{eq:PaulinNormalisedTensor0Gen}, 
we see that
\begin{equation}
 \label{eq:StateFrequenciesF1}
 \pr_\pi( S \ge s/T )
 \wle \exp \bigg( - \frac{M \eps s^2}{8 \frac{\al \eps}{M} + 20 s} \bigg).
\end{equation}
Then observe that $\eps \ge \frac{\gamin}{\De}$ and
\[
 \al
 \weq \sum_m \Var_{\pi_m}(f_m) \frac{T+1/\ga_m}{\ga_m}
 \wle \frac{T+1/\gamin}{\gamin} M V,
\]
with $V = \frac{1}{M} \sum_m \Var_{\pi_m}(f_m)$,
so that
\[
 \frac{M \eps s^2}{8 \frac{\al \eps}{M} + 20 s}
 \wge \frac{M s^2}{8 ( \frac{T+1/\gamin}{\gamin} V ) + 20 s \frac{\De}{\gamin}}
 \weq \frac{\gamin M s^2}{8 (T+1/\gamin) V + 20 s \De}.
\]
By plugging the above inequality into \eqref{eq:StateFrequenciesF1} and substituting $sT$ in place of $s$,
we find that
\begin{equation}
 \label{eq:StateFrequenciesFGStationary}
 \pr_\pi( S \ge s )
 \wle \exp \bigg( - \frac{\gamin M T s^2}{8 (1+1/(\gamin T)) V + 20 \De s} \bigg).
\end{equation}

Finally, let us prove the claim in the general nonstationary context.
By Lemma~\ref{the:NonstationaryMC}, we see that
\[
 \pr(S \ge s)
 \wle e^{\frac12 \sum_m D_2( \mu_m \| \pi_m )} \ \pr_\pi(S \ge s)^{1/2}
 \weq e^{\frac12 M \eta} \ \pr_\pi(S \ge s)^{1/2}
\]
By combining this with \eqref{eq:StateFrequenciesFGStationary}, the claim follows.
\end{proof}

\subsubsection{Concentration of empirical state frequencies}
\label{sec:ConcentrationStateFrequencies}

\begin{proposition}
\label{the:StateFrequenciesInd}
For any state $i$
and any $s > 0$,
the empirical frequency $N_i = \sum_{m=1}^M \sum_{t=1}^T X_{m,t-1}^{(i)}$
of state~$i$ satisfies
\[
 \pr\bigg( \Bigabs{\frac{N_i}{MT} - \barpi_i} \ge s \bigg)
 \wle 2 \exp \bigg( - \frac{\gamin MT s^2}{16 (1+1/(\gamin T)) \barpi_i + 40 s} + \frac12 M \eta \bigg).
\]
\end{proposition}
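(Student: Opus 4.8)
The plan is to deduce Proposition~\ref{the:StateFrequenciesInd} from Theorem~\ref{the:StateFrequencies} by the obvious choice of test functions, namely $f_m = 1_{\{i\}}$ for every $m$, and then to handle both tails by a union bound. First I would note that with this choice, $\pi_m(f_m) = \pi_m(i)$, so that $\frac{1}{M}\sum_m \pi_m(f_m) = \barpi_i$ and the ensemble--time average $S$ in Theorem~\ref{the:StateFrequencies} becomes exactly $\frac{N_i}{MT} - \barpi_i$; this is the key identification. It then remains only to estimate the two constants $\De$ and $V$ appearing in the bound \eqref{eq:StateFrequenciesF}.

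For $\De = \max_m \max_x |f_m(x) - \pi_m(f_m)| = \max_m \max_x |1(x=i) - \pi_m(i)|$, the inner maximum over $x$ is $\max\{1 - \pi_m(i), \, \pi_m(i)\} \le 1$, so $\De \le 1$. For the variance proxy, $\Var_{\pi_m}(f_m) = \pi_m(i)(1-\pi_m(i)) \le \pi_m(i)$, hence $V = \frac{1}{M}\sum_m \Var_{\pi_m}(f_m) \le \frac{1}{M}\sum_m \pi_m(i) = \barpi_i$. Plugging $\De \le 1$ and $V \le \barpi_i$ into \eqref{eq:StateFrequenciesF} (the denominator is increasing in $\De$ and in $V$, so upper bounds on them yield a valid upper bound on the probability) gives
\[
 \pr\bigg( \frac{N_i}{MT} - \barpi_i \ge s \bigg)
 \wle \exp \bigg( - \frac{\gamin MT s^2}{16 (1+1/(\gamin T)) \barpi_i + 40 s} + \frac12 M \eta \bigg).
\]

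For the lower tail I would apply Theorem~\ref{the:StateFrequencies} instead to $f_m = -1_{\{i\}}$; the quantities $\De$ and $V$ are unchanged (they depend on $f_m$ only through centred absolute deviations and squared deviations), $\pi_m(f_m) = -\pi_m(i)$, and the event $\{S \ge s\}$ becomes $\{\barpi_i - \frac{N_i}{MT} \ge s\}$, yielding the same bound for the lower tail. Adding the two tail bounds via a union bound produces the stated factor of $2$ and completes the proof. There is essentially no obstacle here: the only thing to be slightly careful about is that monotonicity of the right-hand side of \eqref{eq:StateFrequenciesF} in $\De$ and $V$ justifies replacing them by the upper bounds $1$ and $\barpi_i$, and that the negated test function genuinely reproduces the same $\De, V$; both are immediate.
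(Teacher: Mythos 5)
Your proposal is correct and matches the paper's own proof essentially step for step: the paper also applies Theorem~\ref{the:StateFrequencies} with $f_m = 1_{\{i\}}$, bounds $\De \le 1$ and $V \le \barpi_i$, and handles the lower tail by repeating the argument with $f_m = -1_{\{i\}}$ before taking a union bound. The monotonicity remark you add is a minor (and valid) justification the paper leaves implicit.
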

\begin{proof}
Fix $f_m(x) = 1_{\{i\}}(x)$ and define
$S = \frac{1}{MT} \sum_{m=1}^M \sum_{t=1}^T (f_m(X_{m,t-1}) - \pi_m(f_m))$.
Then 
$S = \frac{N_i}{MT} - \barpi_i$.
Note that
\[
 \De
 \weq \max_m \max_x \abs{f_m(x) - \pi_m(f_m)}
 \wle 1
\]
and
\[
 V
 \weq \frac{1}{M} \sum_m \sum_x ( f_m(x) - \pi_m(f_m))^2 \pi_m(x)
 \weq \frac{1}{M} \sum_m \pi_m(i)(1-\pi_m(i))
 \wle \barpi_i.
\]
By applying
Theorem~\ref{the:StateFrequencies}, we find that
\[
 \pr( S \ge s )
 \wle \exp \bigg( - \frac{\gamin MT s^2}{16 (1+1/(\gamin T)) \barpi_i + 40 s} + \frac12 M \eta \bigg).
\]
By repeating the same argument for $f_m(x) = - 1_{\{i\}}(x)$, we see that the above bound is valid 
also for $\pr( S \le - s )$.
Hence the claim follows.
\end{proof}

\subsubsection{Concentration of transition frequencies}
\label{sec:ConcentrationTransitionFrequencies}

In this section we will derive a concentration inequality for the empirical transition probabilities
out from a particular state $i$, given that the empirical frequency of visiting $i$ is concentrated
to an interval.  This analysis extends a martingale concentration argument in \cite{Wolfer_Kontorovich_2019} 
from a single Markov chain to a heterogeneous ensemble of Markov chains.

Denote the state and transition frequencies of the chain $m$ by
\begin{equation}
 \label{eq:MarkovFrequenciesIndiv}
 N_{m,i} = \sum_{t=1}^{T} X_{m,t-1}^{(i)}
 \qquad\text{and}\qquad
 N_{m,i,j} = \sum_{t=1}^T X_{m,t-1}^{(i)} X_{m,t}^{(j)}
\end{equation}
and let
\begin{equation}
 \label{eq:MeanTransitionMatrix}
 \tilde P_{i,j}
 \weq
 \begin{cases}
   \sum_m \frac{N_{m,i}}{N_i} P_{m,i,j} &\quad \text{if $N_i > 0$}, \\
   \frac{1}{\abs{\Ome}} &\quad \text{else},
 \end{cases}
\end{equation}
where $P_{m,i,j} = P_m(i,j)$ is the $(i,j)$-entry of the transition matrix $P_m$.

\begin{proposition}
\label{the:TransitionFrequencies}
Let $(X_{m, t})$ be a random array in $\Ome^{M \times (T+1)}$
in which the rows are independent, and
each row is a Markov chain with transition matrix $P_m \in \R^{\abs{\Ome} \times \abs{\Ome}}$.
Then the empirical transition matrix $\hat P$ computed by \eqref{eq:EmpiricalTransitionMatrix}
is concentrated around $\tilde P$ defined by \eqref{eq:MeanTransitionMatrix}
according to
\begin{align*}
 &\pr( \norm{\hat P_{i,:} - \tilde P_{i,:}}_1 \ge \eps,  \, s_1 \le N_i \le s_2) \\
 &\wle (1+\abs{\Ome}) \exp \bigg( - \frac{3 \eps^2 s_1}{6 \sqrt{2} \abs{\Ome} s_2/s_1 + 2 \sqrt{2} \abs{\Ome}^{1/2} \eps} \bigg)
\end{align*}
for all $i \in \Ome$, $\eps > 0$, and $0 < s_1 \le s_2$.
\end{proposition}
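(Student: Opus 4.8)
The plan is to follow the Wolfer–Kontorovich martingale strategy, lifting it from a single chain to the ensemble by concatenating trajectories. Fix the state $i$ and condition on the event $\{s_1 \le N_i \le s_2\}$. The key observation is that $\hat P_{i,:} - \tilde P_{i,:}$ can be written, on $\{N_i>0\}$, as $\frac{1}{N_i}\sum_{m}\sum_{t=1}^T X_{m,t-1}^{(i)}\bigl(\mathbf{e}_{X_{m,t}} - P_{m,i,:}\bigr)$, i.e. a normalised sum of the vector-valued increments $Y := \mathbf{e}_{X_{m,t}} - P_{m,i,:} \in \R^{\abs{\Ome}}$ indexed over all pairs $(m,t)$ at which chain $m$ sits in state $i$ at time $t-1$. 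Ordering the pairs $(m,t)$ by $m$ first and then $t$, and taking the natural filtration generated by $(X_{m,s})_{s\le t}$ for the appropriate $m$, each such increment has conditional mean zero given the past, because $\E[\mathbf{e}_{X_{m,t}} \mid X_{m,t-1}=i] = P_{m,i,:}$ and the rows are independent. Hence the partial sums form a vector-valued martingale (after the usual symmetrisation into a self-adjoint dilation, these become a matrix martingale), with predictable quadratic variation controlled by $\sum_{m}\sum_{t} X_{m,t-1}^{(i)} \operatorname{diag}(P_{m,i,:}) \preceq N_i \cdot I$ up to the relevant norms, and increments bounded in spectral norm by $\sqrt{2}$ (since $\norm{\mathbf{e}_j - p}_2 \le \sqrt{2}$ for a probability vector $p$).

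The second step is to invoke the Freedman–Tropp matrix martingale Bernstein inequality (exactly as in \cite{Wolfer_Kontorovich_2019}), which gives, for the $\ell_2\to\ell_2$ operator norm of the $(1+\abs{\Ome})$-dimensional dilation, a bound of the form $(1+\abs{\Ome})\exp\bigl(-\frac{r^2/2}{\sigma^2 + Lr/3}\bigr)$, where $L=\sqrt{2}$ is the increment bound and $\sigma^2$ is the (random but, on our event, bounded by $N_i \le s_2$ times a constant) quadratic variation. Here $r$ is the deviation threshold in spectral norm for the \emph{unnormalised} sum; since $\norm{\hat P_{i,:}-\tilde P_{i,:}}_1 \le \sqrt{\abs{\Ome}}\,\norm{\hat P_{i,:}-\tilde P_{i,:}}_2$ on the finite state space, an $\ell_1$-deviation of $\eps$ corresponds to an $\ell_2$-deviation of at least $\eps/\sqrt{\abs{\Ome}}$ of the normalised sum, hence to an unnormalised spectral deviation of $r = N_i \eps/\sqrt{\abs{\Ome}} \ge s_1 \eps/\sqrt{\abs{\Ome}}$. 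Plugging $\sigma^2 \le c\, s_2$, $L = \sqrt 2$, and $r \ge s_1\eps/\sqrt{\abs\Ome}$ into the Freedman–Tropp bound and simplifying the constants (tracking the factors of $2$, $3$, $6$, $\sqrt 2$) yields exactly the displayed expression $(1+\abs\Ome)\exp\bigl(-\frac{3\eps^2 s_1}{6\sqrt2\,\abs\Ome s_2/s_1 + 2\sqrt2\,\abs\Ome^{1/2}\eps}\bigr)$.

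The main technical obstacle is the conditioning on $\{s_1\le N_i\le s_2\}$: $N_i$ is a random stopping-type quantity, not a fixed number, so one cannot simply apply a fixed-length martingale inequality. The standard fix, which I would carry out, is to run the matrix martingale over the \emph{full} index set of all $(m,t)$ pairs but only accumulate increments when $X_{m,t-1}^{(i)}=1$, i.e. define the martingale with the indicator $X_{m,t-1}^{(i)}$ built into each increment; then the quadratic variation up to the end of the array is exactly $\sum_{m,t} X_{m,t-1}^{(i)}\operatorname{diag}(P_{m,i,:})$, whose spectral norm is at most $N_i$, and on the event $N_i \le s_2$ this is bounded by $s_2$ deterministically. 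Freedman–Tropp in its "with predictable quadratic variation bound" form then applies with the stopping at the random but bounded level, and intersecting with $\{N_i \ge s_1\}$ gives the lower bound $r \ge s_1\eps/\sqrt{\abs\Ome}$ on the deviation radius. Since the rows are independent and the increments only depend on within-row transitions, the ensemble structure causes no additional difficulty beyond keeping track of the combined index set; the heterogeneity of the $P_m$ is harmless because the martingale is centred row-by-row at $P_{m,i,:}$, which is precisely why the center is $\tilde P$ rather than $P$.
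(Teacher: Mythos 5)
Your proposal is correct and follows essentially the same route as the paper: indicator-weighted, row-wise centred martingale increments bounded by $\sqrt{2}$, predictable quadratic variation bounded via $N_i$ so that the event $\{N_i\le s_2\}$ gives a deterministic variance bound, the matrix Freedman--Tropp inequality, and the $\ell_1\!\to\!\ell_2$ conversion with the factor $\abs{\Ome}^{1/2}$ together with $N_i\ge s_1$ for the deviation radius. The only (immaterial) difference is the bookkeeping of the index ordering: you concatenate trajectories chain by chain, while the paper vectorises the array time-slice by time-slice; both yield the same martingale structure by row independence.
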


Again, a simple approach would be to apply a martingale concentration argument to a vector-valued Markov chain
$(X_{:,t})$, but this would lead to a suboptimal bound due to similar issues as in Remark~\ref{rem:PaulinVector}.
To properly incorporate the concentration properties due to the two dimensions of averaging, both
over the ensemble members $m$ and the time slots $t$.
Instead, we will here study an alternative martingale corresponding to the time series obtained by
vectorising the data matrix $(X_{m,t})$ by stacking its columns on top of each other.

The starting point for proving Proposition~\ref{the:TransitionFrequencies} is to note that
\[
 N_{i,j} = \sum_{m=1}^M \sum_{t=1}^T X_{m,t-1}^{(i)} X_{m,t}^{(j)}
 \qquad\text{and}\qquad
 N_i \tilde P_{i,j} = \sum_{m=1}^M \sum_{t=1}^TX_{m,t-1}^{(i)} P_{m,i,j}.
\]
Then we may write 
\begin{equation}
 \label{eq:CenteredTransitionCount}
 N_{i,j} - N_i \tilde P_{i,j}
 \weq \sum_{m=1}^M \sum_{t=0}^T U_{m,t,i,j}
\end{equation}
in terms of
\begin{equation}
 \label{eq:UDef}
 U_{m,t,i,j}
 \weq
 \begin{cases}
 0, &\qquad t=0, \\
 X_{m,t-1}^{(i)} (X_{m,t}^{(j)} - P_{m,i,j}), &\qquad t \ge 1.
 \end{cases}
\end{equation}
The zeros are added to obtain a more convenient indexing, with 
$U_{:,:,i,j} \in \R^{M \times (T+1)}$ having the dimensions
as the data array $X$.
The challenge is to represent these indicators as martingale differences.
To resolve this, we change the indexing of time as follows.
Let
\[
 \barU_{:,i,j} = \vec(U_{:,:,i,j})
\]
be a vector
obtained by stacking the columns of the matrix $U_{:,:,i,j}$ on top of each other.
Then $\barU_{:,i,j}$ is a random vector in $\R^K$ with $K = M(T+1)$
which shall be viewed as a random process $k \mapsto \barU_{k,i,j}$
with time parameter $k$.
We define an associated cumulative process by
\begin{equation}
 \label{eq:CumulativeProcess}
 \barV_{k,i,j}
 \weq \sum_{k'=1}^k \barU_{k',i,j},
 \qquad k=0,\dots,K,
\end{equation}
with the empty sum considered as zero. Equality \eqref{eq:CenteredTransitionCount} can
then be written as
\begin{equation}
 \label{eq:CenteredTransitionCountTranslated}
 \barV_{K,i,:}
 \weq N_{i,:} - N_i \tilde P_{i,:}.
\end{equation}

The random process $k \mapsto \barV_{k,i,:}$ is adapted to the filtration
$(\cF_0, \cF_1,\dots)$
of sigma-algebras
$\cF_k = \sig(\barX_1, \dots \barX_k)$ in which $\barX_k$ indicates
the $k$-th element of the vectorised data matrix $\barX = \vec(X)$.
The next result confirms that the process is a martingale with bounded increments.

\begin{lemma}
\label{the:Martingale}
The random process $k \mapsto \barV_{k,i,:}$ defined by \eqref{eq:CumulativeProcess} is a martingale with respect to
$(\cF_0, \cF_1, \dots)$,
with increments bounded by
$\norm{ \barU_{k,i,:} }_2 \le \sqrt{2}$
almost surely.
\end{lemma}

\begin{proof}
Define
\begin{equation}
 \label{eq:YDef}
 Y_{m,t,i,j}
 \weq
 \begin{cases}
 0, &\qquad t=0, \\
 X_{m,t-1}^{(i)} X_{m,t}^{(j)}, &\qquad t \ge 1.
 \end{cases}
\end{equation}
Define $\barY_{:,i,j} = \vec(Y_{:,:,i,j})$.
The entries of these matrices and their vectorisations are related by
\[
 Y_{m,t,i,j} = \barY_{m+Mt,i,j},
\]
where $(m,t) \mapsto m + tM$ is a bijection from $\{1,\dots,M\} \times \{0,\dots,T\}$ into $\{1,\dots,K\}$.
We will verify that for any $k = m + tM$,
\begin{equation}
 \label{eq:ConditionalTransitionIndicator}
 \E( \barY_{k,i,j} \, | \, \cF_{k-1})
 \weq
 \begin{cases}
 0, &\qquad t=0, \\
 X_{m,t-1}^{(i)} P_{m,i,j}, &\qquad t \ge 1.
 \end{cases}
\end{equation}
The case with $t=0$ is immediate.
Assume next that $k = m + tM$ for some $t \ge 1$.
Then, because the rows of $X$ are independent, it follows that
\begin{align*}
 \E( \barY_{k,i,j} \, | \, \cF_{k-1})
 &\weq \E\Big[ X_{m,t-1}^{(i)} X_{m,t}^{(j)} \, \big| \ X_{\ell,s} : \ell + sM < m + tM \Big] \\
 &\weq \E\Big[ X_{m,t-1}^{(i)} X_{m,t}^{(j)} \, \big| \ X_{m,s} : m + sM < m + tM \Big] \\
 &\weq \E\Big[ X_{m,t-1}^{(i)} X_{m,t}^{(j)} \, \big| \ X_{m,1}, \dots, X_{m,t-1} \Big].
\end{align*}
Because $X_{m:}$ is a Markov chain with transition matrix $P_m$,
the last expression on the right equals $X_{m,t-1}^{(i)} P_{m,i,j}$,
confirming \eqref{eq:ConditionalTransitionIndicator}.
By comparing \eqref{eq:ConditionalTransitionIndicator} with \eqref{eq:UDef} and \eqref{eq:YDef},
we find that
\[
 \barU_{k,i,j} \weq \barY_{k,i,j} - \E( \bar Y_{k,i,j} \, | \, \cF_{k-1}),
\]
so that $\E( \bar U_{k,i,j} \, | \, \cF_{k-1}) = 0$ for all $k=1,\dots,K$.
It follows that $k \mapsto \barV_{k,i,j}$, and hence also 
the vector-valued process $k \mapsto \barV_{k,i,:}$, is a martingale.

To verify the claim,
let us fix an integer $k = m + tM$.
The stated inequality is trivial when $t=0$ because then $\barU_{k,i,:} = 0$.
Assume next that $t \ge 1$.
Then \eqref{eq:UDef} implies that
\begin{align*}
 \norm{ \barU_{k,i,:} }_2^2
 &\weq X_{m,t-1}^{(i)} \sum_{j \in \Ome} \big( X_{m,t}^{(j)} - P_{m,i,j} \big)^2 \\
 &\wle X_{m,t-1}^{(i)} \sum_{j \in \Ome} \big( X_{m,t}^{(j)} + P_{m,i,j}^2 \big)
 \weq X_{m,t-1}^{(i)} \Big( 1 + \twonorm{P_{m,i,:}}^2 \Big).
\end{align*}
Because $\norm{P_{m,i,:}}_2^2 \le \norm{P_{m,i,:}}_\infty \norm{P_{m,i,:}}_1 \le 1$,
the claim follows.
\end{proof}

We will derive a concentration inequality
for $k \mapsto \barV_{k,i,:}$ by
applying a Freedman--Tropp matrix martingale concentration inequality.
To do this, we need bounds for associated predictable quadratic variation terms
\begin{align}
 \label{eq:QVCol}
 \Wcol_{K,i} &\weq \sum_{k=1}^K \E \Big[ \barU_{k,i,:} (\barU_{k,i,:})^\top \, \Big| \, \cF_{k-1} \Big], \\
 \label{eq:QVRow}
 \Wrow_{K,i} &\weq \sum_{k=1}^K \E \Big[ (\barU_{k,i,:})^\top \barU_{k,i,:} \, \Big| \, \cF_{k-1} \Big].
\end{align}
The following result provides simple formulas for these terms.

\begin{lemma}
\label{the:QV}
The predictable quadratic variation terms \eqref{eq:QVCol}--\eqref{eq:QVRow}
can be written as
\begin{align}
 \label{eq:QVColSimple}
 \Wcol_{K,i} &\weq \sum_{m=1}^M N_{m,i} (1 - \norm{P_{m,i,:}}_2^2), \\
 \label{eq:QVRowSimple}
 \Wrow_{K,i} &\weq \sum_{m=1}^M N_{m,i} \big( \diag(P_{m,i,:}) - (P_{m,i,:})^\top P_{m,i,:} \big).
\end{align}
\end{lemma}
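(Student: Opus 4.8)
The plan is to compute each conditional expectation $\E[\barU_{k,i,:}(\barU_{k,i,:})^\top \mid \cF_{k-1}]$ and $\E[(\barU_{k,i,:})^\top\barU_{k,i,:}\mid \cF_{k-1}]$ entrywise, using the explicit form of $\barU_{k,i,:}$ from \eqref{eq:UDef} together with the conditional law of $X_{m,t}$ given the past established in the proof of Lemma~\ref{the:Martingale}. Writing $k = m + tM$, the increment vanishes when $t = 0$, so only indices with $t \ge 1$ contribute, and for those $\barU_{k,i,j} = X_{m,t-1}^{(i)}(X_{m,t}^{(j)} - P_{m,i,j})$. Since $X_{m,t-1}^{(i)}$ is $\cF_{k-1}$-measurable (it sits earlier in the vectorised order), it factors out of the conditional expectation, and because $(X_{m,t-1}^{(i)})^2 = X_{m,t-1}^{(i)}$, the outer product $\barU_{k,i,:}(\barU_{k,i,:})^\top$ carries a single factor $X_{m,t-1}^{(i)}$ times the matrix $(X_{m,t}^{(j)} - P_{m,i,j})(X_{m,t}^{(\ell)} - P_{m,i,\ell})$ over $j,\ell \in \Ome$.

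First I would handle $\Wrow_{K,i}$ (the $\ell_2$-Gram / row term). Conditionally on $\cF_{k-1}$ and on $X_{m,t-1} = i$, the vector $(X_{m,t}^{(j)})_{j}$ is a single multinomial trial with parameter $P_{m,i,:}$; its conditional covariance matrix is exactly $\diag(P_{m,i,:}) - (P_{m,i,:})^\top P_{m,i,:}$. Hence
\[
 \E\big[(\barU_{k,i,:})^\top\barU_{k,i,:} \mid \cF_{k-1}\big]
 \weq X_{m,t-1}^{(i)}\big(\diag(P_{m,i,:}) - (P_{m,i,:})^\top P_{m,i,:}\big),
\]
and summing over $k = 1,\dots,K$, i.e.\ over all pairs $(m,t)$ with $t \ge 1$, collapses $\sum_{t=1}^T X_{m,t-1}^{(i)}$ into $N_{m,i}$ by \eqref{eq:MarkovFrequenciesIndiv}, giving \eqref{eq:QVRowSimple}. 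For $\Wcol_{K,i}$ (the outer $(\barU_{k,i,:})^\top$... wait — the column term $\barU_{k,i,:}(\barU_{k,i,:})^\top$ is $1\times 1$ since $\barU_{k,i,:}$ is a row vector of length $|\Ome|$; so this is the scalar $\|\barU_{k,i,:}\|_2^2$), I would reuse the computation $\|\barU_{k,i,:}\|_2^2 = X_{m,t-1}^{(i)}\sum_j (X_{m,t}^{(j)} - P_{m,i,j})^2$ from the proof of Lemma~\ref{the:Martingale}, take its conditional expectation, and observe that $\E[\sum_j (X_{m,t}^{(j)} - P_{m,i,j})^2 \mid X_{m,t-1}=i] = \sum_j P_{m,i,j}(1 - 2P_{m,i,j}) + \sum_j P_{m,i,j}^2 = 1 - \|P_{m,i,:}\|_2^2$ (it is the trace of the multinomial covariance). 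Summing over $k$ again contracts to $N_{m,i}$, yielding \eqref{eq:QVColSimple}.

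The only real subtlety — the place to be careful rather than the place of genuine difficulty — is the bookkeeping of the vectorised time index: one must confirm that when $k = m + tM$ with $t \ge 1$, the variable $X_{m,t-1}$ lies strictly before position $k$ in the stacking order (so it is $\cF_{k-1}$-measurable) while $X_{m,t}$ does not, exactly the fact already verified in the proof of Lemma~\ref{the:Martingale}. Given that, everything else is the standard identity that the covariance matrix of a single multinomial trial with probability vector $p$ is $\diag(p) - p^\top p$, and that its trace is $1 - \|p\|_2^2$; the double sum over $(m,t)$ then factors cleanly because the $\cF_{k-1}$-measurable weight is always $X_{m,t-1}^{(i)}$, whose $t$-sum is $N_{m,i}$ by definition. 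No step requires more than these observations, so the lemma follows.
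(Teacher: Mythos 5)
Your proposal is correct and follows essentially the same route as the paper's proof: condition on $\cF_{k-1}$, factor out the $\cF_{k-1}$-measurable indicator $X_{m,t-1}^{(i)}$, recognise that the conditional second moments of $(X_{m,t}^{(j)}-P_{m,i,j})_j$ are exactly the covariance matrix $\diag(P_{m,i,:})-(P_{m,i,:})^\top P_{m,i,:}$ of a single multinomial trial (with trace $1-\|P_{m,i,:}\|_2^2$ for the scalar column term), and sum over $t$ to contract $\sum_{t=1}^T X_{m,t-1}^{(i)}$ into $N_{m,i}$. The paper performs the same computation entrywise rather than quoting the multinomial covariance identity, and your check of the vectorised-index bookkeeping matches the argument already given in Lemma~\ref{the:Martingale}.
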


\begin{proof}
Fix an integer $k = m + tM$ with $t \ge 1$.
By recalling the definition \eqref{eq:UDef}, we see that
\[
 \barU_{k,i,j}
 \weq X_{m,t-1}^{(i)} (X_{m,t}^{(j)} - P_{m,i,j})
\]
By noting that
\[
 \sum_{j \in \Ome} \big( X_{m,t}^{(j)} - P_{m,i,j} \big)^2
 \weq 1 - 2 P_{m,i,X_{m,t}} + \norm{P_{m,i,:}}_2^2,
\]
we see that
\begin{align*}
 \barU_{k,i,:} (\barU_{k,i,:})^\top
 \weq \sum_{j \in \Ome} \barU_{k,i,j}^2
 \weq X_{m,t-1}^{(i)} \big(1 - 2 P_{m,i,X_{m,t}} + \norm{P_{m,i,:}}_2^2 \big).
\end{align*}
Because $\E[ P_{m,i,X_{m,t}} \cond X_{m,t-1} ] = \sum_{j \in \Ome} P_{m,i,j}^2 = \norm{P_{m,i,:}}_2^2$
on the event $X_{m,t-1}=i$,
the rowwise independence of $X$ and the Markov property of $X_{m:}$ imply that
\begin{equation}
 \label{eq:QVDifferenceCol}
 \E \Big[ \barU_{k,i,:} (\barU_{k,i,:})^\top \, \Big| \, \cF_{k-1} \Big]
 \weq X_{m,t-1}^{(i)} \big(1 - \norm{P_{m,i,:}}_2^2 \big).
\end{equation}
Similarly, the entries of the matrix
$(\barU_{k,i,:})^\top \barU_{k,i,:} \in \R^{\abs{\Ome} \times \abs{\Ome}}$ satisfy
\begin{align*}
 \barU_{k,i,j} \barU_{k,i,j'}
 \weq X_{m,t-1}^{(i)} &\big( \delta_{jj'} X_{m,t}^{(j)} - X_{m,t}^{(j)} P_{m,i,j'} - X_{m,t}^{b'} P_{m,i,j} + P_{m,i,j} P_{m,i,j'} \big).
\end{align*}
The rowwise independence of $X$ and the Markov property of $X_{m:}$ imply that
\[
 \E \Big[ \barU_{k,i,j} \barU_{k,i,j'} \, \big| \, \cF_{k-1} \Big]
 \weq X_{m,t-1}^{(i)} \big( \delta_{jj'} P_{m,i,j} - P_{m,i,j} P_{m,i,j'} \big),
\]
or in matrix form,
\begin{equation}
 \label{eq:QVDifferenceRow}
 \E \Big[ \, (\barU_{k,i,:})^\top \barU_{k,i,:} \, \big| \, \cF_{k-1} \Big]
 \weq X_{m,t-1}^{(i)} \big( \diag(P_{m,i,:}) - (P_{m,i,:})^\top P_{m,i,:} \big).
\end{equation}

Recall that $\barU_{k,i,j} = 0$ for all $k = m + tM$ such that $t=0$.
Therefore, by plugging \eqref{eq:QVDifferenceCol} into \eqref{eq:QVCol}, we see that
\begin{align*}
 \Wcol_{K,i}
 \weq \sum_{k=1}^K \E \Big[ \barU_{k,i,:} (\barU_{k,i,:})^\top \, \big| \, \cF_{k-1} \Big]
 \weq \sum_{m=1}^M \sum_{t=1}^T X_{m,t-1}^{(i)} \big(1 - \norm{P_{m,i,:}}_2^2 \big),
\end{align*}
so that \eqref{eq:QVColSimple} follows.
Similarly, by plugging \eqref{eq:QVDifferenceRow} into \eqref{eq:QVRow}, we see that
\[
 \Wrow_{K,i}
 \weq \sum_{m=1}^M \sum_{t=1}^T X_{m,t-1}^{(i)} \big( \diag(P_{m,i,:}) - (P_{m,i,:})^\top P_{m,i,:} \big),
\]
which confirms \eqref{eq:QVRowSimple}.
\end{proof}

\begin{proof}[Proof of Proposition~\ref{the:TransitionFrequencies}]
We fix a state $i$ and consider $k \mapsto \barV_{k,i,:}$
defined by \eqref{eq:CumulativeProcess} as a matrix-valued random process
on state space $\R^{1 \times \abs{\Ome}}$.
Lemma~\ref{the:Martingale} implies that
this process is a martingale, with increments bounded by
$\norm{ \barU_{k,i,:} }_2 \le \sqrt{2}$ almost surely.
Lemma~\ref{the:QV} implies that the associated
quadratic variation terms \eqref{eq:QVCol}--\eqref{eq:QVRow} are given by
\begin{align*}
 \Wcol_{K,i} &\weq \sum_{m=1}^M N_{m,i} (1 - \norm{P_{m,i,:}}_2^2), \\
 \Wrow_{K,i} &\weq \sum_{m=1}^M N_{m,i} \big( \diag(P_{m,i,:}) - (P_{m,i,:})^\top P_{m,i,:} \big).
\end{align*}
The spectral norm of $\Wcol_{K,i}$, 
considered as a 1-by-1 matrix,
is bounded by
\[
 \spenorm{\Wcol_{K,i}}
 \weq \sum_{m=1}^M N_{m,i} (1 - \norm{P_{m,i,:}}_2^2)
 \wle \sum_{m=1}^M N_{m,i}
 \weq N_i.
\]
By applying Lemma~\ref{the:SpectralNorm}
and noting that $1 - 2P_{m,i,j} + \norm{P_{m,i,:}}_2^2 \le 1 + \norm{P_{m,i,:}}_2^2$ and
$\norm{P_{m,i,:}}_2 \le 1$, we see that
\begin{align*}
 \spenorm{\Wrow_{K,i}}
 \wle \sum_{m=1}^M N_{m,i}
 \left(
 \sum_{j \in \Ome} P_{m,i,j}^2 \big( 1 - 2P_{m,i,j} + \norm{P_{m,i,:}}_2^2 \big)
 \right)^{1/2}
 \wle \sqrt{2} N_i.
\end{align*}
We conclude that
$\max\{ \spenorm{ \Wrow_{K,i} }, \spenorm{ \Wcol_{K,i} } \} \wle \sqrt{2} N_i$
almost surely.

We may now apply a matrix Freedman--Tropp martingale concentration
inequality \cite[Corollary 1.3]{Tropp_2011} to conclude that for all $\eps > 0$,
\begin{align*}
 \pr\big( \spenorm{ \barV_{k,i,:} } \ge \eps, \ N_i \le s_2 \big)
 &\wle \pr\big( \spenorm{ \barV_{k,i,:}} \ge \eps, \
 \max\{ \spenorm{ \Wrow_{K,i} }, \spenorm{ \Wcol_{K,i} } \} \le \sqrt{2} s_2 \big) \\
 &\wle (1+\abs{\Ome}) \exp \Big( - \frac{\eps^2/2}{\sqrt{2} s_2 + \sqrt{2} \eps/3} \Big).
\end{align*}

By recalling \eqref{eq:CenteredTransitionCountTranslated},
we note that $\hat P_{i,:} - \tilde P_{i,:} = \frac{1}{N_i} (N_{i,:} - N_i \tilde P_{i,:}) = \frac{1}{N_i} \barV_{k,i,:}$
on the event that $N_i > 0$.
It follows that
for any $0 < s_1 \le s_2$,
\begin{align*}
 \pr( \norm{\hat P_{i,:} - \tilde P_{i,:}}_2 \ge \eps, \, s_1 \le N_i \le s_2 )
 &\weq \pr\big( \norm{ \barV_{k,i,:} }_2 \ge \eps N_i, \, s_1 \le N_i \le s_2 \big) \\
 &\wle \pr\big( \norm{ \barV_{k,i,:} }_2 \ge \eps s_1, \, N_i \le s_2 \big) \\
 &\wle (1+\abs{\Ome}) \exp \Big( - \frac{(\eps s_1)^2/2}{\sqrt{2} s_2 + \sqrt{2} \eps s_1/3} \Big) \\
 &\weq (1+\abs{\Ome}) \exp \Big( - \frac{3 \eps^2 s_1}{6 \sqrt{2} s_2/s_1 + 2 \sqrt{2} \eps} \Big).
\end{align*}
Finally, the bound $\norm{x}_1 \le \abs{\Ome}^{1/2} \norm{x}_2$ implies that
\begin{align*}
 \pr( \norm{\hat P_{i,:} - \tilde P_{i,:}}_1 \ge \eps, \, s_1 \le N_i \le s_2 )
 \wle (1+\abs{\Ome}) \exp
 \bigg( - \frac{3 \eps^2 s_1}{6 \sqrt{2} \abs{\Ome} s_2/s_1 + 2 \sqrt{2} \abs{\Ome}^{1/2} \eps} \bigg).
\end{align*}
\end{proof}

\subsection{Proofs of main results}
\label{sec:Proofs}


\begin{proof}[Proof of Theorem~\ref{the:TransitionMatrixError}]
Fix \newtext{$0 < \eps \le 1$ and $\de > 0$.}\margnote{Proof streamlined.}
Let us decompose the estimation error according to
\begin{equation}
 \label{eq:RobustMarkovEstimation1}
 \hat P - P
 \weq (\hat P - \tilde P) + ( \tilde P - P ),
\end{equation}
where $\tilde P$ is defined by \eqref{eq:MeanTransitionMatrix}.

(i) We first derive an upper bound for $\supnorm{\tilde P - P}$ on
the event $\cE = \cap_i \cE_i$ where
\begin{align*}
 \cE_i &\weq \big\{ \tfrac12 \barpi_i M T \le N_i \le \tfrac32 \barpi_i M T \big\}.
\end{align*}
Observe that\margnote{Proof argument simplified.}
\begin{align*}
 \norm{\tilde P_{i,:} - P_{i,:}}_1
 &\weq \Bignorm{ \sum_m \frac{N_{m,i}}{N_i} (P_{m,i,:} - P_{i,:}) }_1
 \wle \sum_m \frac{N_{m,i}}{N_i} \norm{P_{m,i,:} - P_{i,:}}_1
 \newtext{\wle \De_\infty.}
\end{align*}
Alternatively,
the upper bound $N_{m,i} \le T$ implies that
$\frac{N_{m,i}}{N_i} \le \frac{T}{\frac12 \barpi_i MT} = \frac{2}{\barpi_i M}$.
Therefore,
\begin{align*}
 \norm{\tilde P_{i,:} - P_{i,:}}_1
 &\wle \sum_m \frac{N_{m,i}}{N_i} \norm{P_{m,i,:} - P_{i,:}}_1
 \wle \frac{2}{\barpi_i M} \sum_m \norm{P_{m,i,:} - P_{i,:}}_1
 \newtext{\wle \frac{2 \De_1}{\barpimin}}.
\end{align*}
We conclude that
\begin{equation}
 \label{eq:RobustMarkovEstimation2}
 \supnorm{\tilde P - P}
 \wle \min \left\{ \frac{2 \De_1}{\barpimin}, \, \De_\infty \right\}
 \qquad \text{on the event $\cE$}.
\end{equation}

(ii) Define an event
$\cF = \cap_i \cF_i$, where
$
 \cF_i = \big\{ \onenorm{\hhP_{i,:} - \tilde P_{i,:}} \le \newtext{\de} \big\}.
$
The bound \eqref{eq:RobustMarkovEstimation2}
and the triangle inequality applied to \eqref{eq:RobustMarkovEstimation1},
imply that
\begin{equation}
 \label{eq:RobustMarkovEstimation3}
 \supnorm{\hat P - P}
 \wle \newtext{\de} + \min \left\{ \frac{2 \De_1}{\barpimin}, \, \De_\infty \right\}
 \qquad \text{on the event $\cE \cap \cF$}.
\end{equation}

(iii) We analyse the probabilities of $\cE$ and $\cF$.
By applying Proposition~\ref{the:StateFrequenciesInd} with $s = \frac12 \barpi_i$,
we find that
\begin{align*}
 \pr( \cE_i^c )
 \wle 2 \exp \bigg( - \frac{\gamin \barpi_i M T}{64 (1+1/(\gamin T)) + 80} + \frac12 M \eta \bigg)
 \wle \newtext{2 \exp \bigg( - \frac{\barpi_i M T'}{144} + \frac12 M \eta \bigg)}
\end{align*}
\newtext{where $T' = \frac{\gamin T}{1+1/(\gamin T)}$.}
By applying Proposition~\ref{the:TransitionFrequencies} with
$s_1 = \frac12 \barpi_i M T$ and $s_2 = \frac32 \barpi_i M T$,
we find that
\begin{align*}
 \pr( \cF_i^c \cap \cE_i )
 \wle 2 \abs{\Ome} \exp \bigg( -
 \frac{\newtext{\de^2} \barpi_i M T}
 {\newtext{a_1} \abs{\Ome} + \newtext{a_2} \abs{\Ome}^{1/2} \newtext{\de}}
 \bigg),
\end{align*}
where \newtext{$a_1 = 12 \sqrt{2}$ and $a_2 = \frac{4}{3}\sqrt{2}$}.\margnote{Proof shortened and clarified.}
Because $(\cE \cap \cF)^c = \cup_i (\cE_i^c \cup \cF_i^c)$
and
$ \pr( \cE_i^c \cup \cF_i^c )
 = \pr( \cE_i^c ) + \pr( \cF_i^c \cap \cE_i ),
$
we see by the union bound that
\begin{align*}
 \pr( (\cE \cap \cF)^c )
 &\wle \newtext{ \underbrace{2 \abs{\Ome} \exp \bigg( - \frac{\barpimin M T'}{144} + \frac12 M \eta \bigg)}_{p_1}}
 + \underbrace{2 \abs{\Ome}^2 \exp \bigg( - \frac{\newtext{\de^2} \barpi_i M T}
 {\newtext{a_1} \abs{\Ome} + \newtext{a_2} \abs{\Ome}^{1/2} \newtext{\de}}
 \bigg)}_{\newtext{p_2}}.
\end{align*}
\newtext{
We note that $p_1 \le \frac12 \eps$ and $p_2 \le \frac12 \eps$ when 
\begin{align}
 \label{eq:p1Proof}
 \barpimin M T'
 &\wge 144 \log \frac{4\abs{\Omega}}{\eps} + M \eta, \\
 \label{eq:p2Proof}
 \frac{ \de^2 \barpi_i M T}{a_1 \abs{\Ome} + a_2 \abs{\Ome}^{1/2} \de}
 &\wge \log \frac{4\abs{\Omega}^2}{\eps}.
\end{align}
}
\newtext{
Because\margnote{Proof updated to match updated theorem statement.}
$
\log \frac{4\abs{\Omega}^2}{\eps}
\le 2 \log \frac{2\abs{\Omega}}{\eps}
\le 2 \log \frac{4\abs{\Omega}}{\eps}$ due to $\eps \ge \eps^2$,
for \eqref{eq:p2Proof} it suffices that
\[
 \frac{\de^2}{a_1 \abs{\Ome} + a_2 \abs{\Ome}^{1/2} \de}
 \wge 2 \beta,
\]
where $\beta = \frac{\log \frac{4\abs{\Omega}}{\eps}}{\barpimin M T}$.
By writing $\de = \al \abs{\Omega}^{1/2} \beta^{1/2}$ in terms of
a normalised parameter $\al > 0$, we see that this 
is equivalent to
$
 \frac{\al^2}{a_1 + a_2 \beta^{1/2} \al} \ge 2.
$
In particular, when \eqref{eq:p1Proof} holds, we see that $\beta \le \frac{1}{144}$ due to $T' \le T$,
and therefore $\frac{\al^2}{a_1 + a_2/\sqrt{144} \al} \ge 2$ is sufficient
for \eqref{eq:p2Proof}. Numerically we may check that $\al = 7$ suffices.
By \eqref{eq:RobustMarkovEstimation3},
\[
 \supnorm{\hat P - P}
 \wle 7 \sqrt{\abs{\Omega}\beta} + 2 \min \left\{ \frac{\De_1}{\barpimin}, \, \De_\infty \right\}
 \qquad \text{on the event $\cE \cap \cF$}
\]
that holds with probability at least $1-\eps$, whenever \eqref{eq:p1Proof} is valid.
This confirms the statement of the theorem, and we may choose $C_1 = 7$, $C_2=2$, and $C_3 = 144$
as the universal constants.
}
\end{proof}

\begin{proof}[Proof of Theorem~\ref{the:StationaryDistributionError}]
Fix \newtext{$\de, \eps \in (0,1]$}.
By applying Proposition~\ref{the:StateFrequenciesInd} with $s=\newtext{\de}$, we find that
\begin{align*}
 \pr( \abs{\hat \pi_i - \barpi_i} \ge \newtext{\de} )
 \wle 2 \exp \bigg( - \frac{\gamin MT \newtext{\de^2}}{16 (1+1/(\gamin T)) \barpi_i + 40 \newtext{\de}} + M \eta \bigg)
 \wle \newtext{2 \exp \bigg( - \frac{MT' \newtext{\de^2}}{56} + \frac12 M \eta \bigg)},
\end{align*}
\newtext{where $T' = \frac{\gamin T}{1+1/(\gamin T)}$.}
\newtext{
Then by the union bound,\margnote{Proof updated to match updated theorem statement.}
\[
 \pr( \supnorm{\hat \pi - \barpi} \ge \newtext{\de} )
 \wle 2 \abs{\Omega} \exp \bigg( - \frac{MT' \newtext{\de^2}}{56} + M \eta \bigg).
\]
The right side of the above inequality is at most $\eps$ when
$\frac{MT' \newtext{\de^2}}{56} - M \eta \ge \log \frac{2\abs{\Omega}}{\eps}$.
Therefore, $\supnorm{\hat \pi - \barpi} < \de$ with probability at least $1-\eps$,
when we set 
$\de = C \sqrt{ \frac{\log (2\abs{\Omega}/\eps) + M\eta}{MT'} }$ with $C=\sqrt{56}$.

In the above derivation we implicitly assumed that
$C \sqrt{ \frac{\log (2\abs{\Omega}/\eps) + M\eta}{MT'} } \le 1$;
if this is not the case, then the claim of the theorem is trivial.
}
\end{proof}


\begin{proof}[Proof of Theorem~\ref{the:TransitionMatrixErrorCorrupted}]
The state and transition frequencies in \eqref{eq:MarkovFrequencies} 
of the data matrix can be decomposed as
$N_i = N_i^{(0)} + N_i^{(1)}$ and
$N_{i,j} = N_{i,j}^{(0)} + N_{i,j}^{(1)}$, where
\[
 N_i^{(k)} = \sum_{m \in \cM_k} \sum_{t=1}^{T} X_{m,t-1}^{(i)}
 \qquad\text{and}\qquad
 N_{i,j}^{(k)} = \sum_{m \in \cM_k} \sum_{t=1}^T X_{m,t-1}^{(i)} X_{m,t}^{(j)},
\]
and $\cM_0$ and $\cM_1$ denote the sets of uncorrupted and corrupted rows, respectively.
Analogously with \eqref{eq:EmpiricalTransitionMatrix} and \eqref{eq:EmpiricalDistribution},
define the empirical transition matrix and the empirical distribution
of the uncorrupted rows by
\[
 \hat P^{(0)}_{i,j}
 =
 \begin{cases}
   \frac{N^{(0)}_{i,j}}{N^{(0)}_i} &\quad \text{if $N^{(0)}_i > 0$}, \\
   \frac{1}{\abs{\Ome}} &\quad \text{else},
 \end{cases}
 \qquad \text{and} \qquad
 \hat\pi^{(0)}_i
 = \frac{N^{(0)}_i}{M_0 T}.
\]

(i) Fix a state $i$ such that $\newtext{N_i^{(0)}} > 0$.\margnote{Corrected $N_i \mapsto N_i^{(0)}$}
Then
\[
 \hat P_{i,j} - P_{i,j}
 \weq \frac{N^{(0)}_{i,j}}{N_i} + \frac{N^{(1)}_{i,j}}{N_i} - P_{i,j}
 \weq \frac{N^{(0)}_i}{N_i} (\hat P^{(0)}_{i,j} - P_{i,j})
 + \frac{N^{(1)}_{i,j}}{N_i} - \frac{N^{(1)}_i}{N_i} P_{i,j}.
\]
It follows that 
\[
 \abs{\hat P_{i,j} - P_{i,j}}
 \wle \abs{\hat P^{(0)}_{i,j} - P_{i,j}} + \frac{N^{(1)}_{i,j}}{N_i} + \frac{N^{(1)}_i}{N_i} P_{i,j}.
\]
By summing over $j$, we find that
\[
 \onenorm{\hat P_{i,:} - P_{i,:}}
 \wle \onenorm{\hat P^{(0)}_{i,:} - P_{i,:}} + 2 \frac{N^{(1)}_i}{N_i}.
\]
Also, by noting that $N_i = M_0 T \hat\pi^{(0)}_i + N^{(1)}_i$ and $N^{(1)}_i \le M_1 T$, it follows that
\[
 \frac{N^{(1)}_i}{N_i}
 \weq \frac{N^{(1)}_i}{M_0 T \hat\pi^{(0)}_i + N^{(1)}_i}
 \wle \frac{M_1 T}{M_0 T \hat\pi^{(0)}_i + M_1 T}
 \wle \frac{M_1 T}{M T \hat\pi^{(0)}_i}
 \weq \frac{M_1/M}{\hat\pi^{(0)}_i}.
\]
By combining the above two inequalities, we conclude that
\newtext{when $\hat\pi^{(0)}_{\rm min} > 0$},\margnote{Shortened and clarified.}
\begin{equation}
 \label{eq:TransitionMatrixErrorCorrupted1}
 \newtext{
 \supnorm{\hat P - P} \wle \supnorm{\hat P^{(0)} - P}
 + 2 \frac{M_1/M}{\hat\pi^{(0)}_{\rm min}}.
 }
\end{equation}

(ii) We apply the earlier concentration results to the submatrix $(X_{m,t})_{m \in \cM_0, t \le T}$ 
consisting of the uncorrupted rows, for which $\hat P^{(0)}$ is the empirical transition matrix
and $\hat\pi^{(0)}$ is the empirical distribution.
Then we may apply Theorems~\ref{the:TransitionMatrixError} and~\ref{the:StationaryDistributionError}
verbatim to the data matrix $(X_{m,t})_{m \in \cM_0, t \le T}$. 
\newtext{
By Theorem~\ref{the:TransitionMatrixError},\margnote{Proof adapted to match the updated theorem statement}
there exist universal constants \(\tC_1, \tC_2, \tC_3 > 0\) such that
\begin{equation}
 \label{eq:TransitionMatrixErrorNewForCorrupted}
 \supnorm{\hhP^0 - P}
 \wle \tC_1 \sqrt{ \frac{\abs{\Omega} \log(8\abs{\Omega}/\eps)}{\barpimin^{(0)} M_0 T} }
 + \tC_2 \min \left\{ \frac{\De_1^{(0)}}{\barpimin^{(0)}}, \, \De_\infty^{(0)} \right\}
\end{equation}
with probability at least $1-\eps/2$,
provided that
\begin{equation}
 \label{eq:TransitionMatrixErrorLargeSampleForCorrupted}
 M_0 T' \wge \tC_3 \frac{\log(8\abs{\Omega}/\eps) + M_0 \eta^{(0)}}{\barpimin^{(0)}}.
\end{equation}
\newtext{By Theorem~\ref{the:StationaryDistributionError},}
there exists a universal constant \(\tC_4 > 0\) such that
\begin{equation}
 \label{eq:StationaryDistributionErrorNewForCorrupted}
 \supnorm{\hat \pi^{(0)} - \bar\pi^{(0)}}
 \wle \tC_4 \sqrt{ \frac{\log(4\abs{\Omega}/\eps) + M_0 \eta^{(0)}}{M_0 T'} }
\end{equation}
with probability at least $1-\eps/2$.

(iii) Let us now fix a constant $C > 0$, and consider the assumption
\begin{equation}
 \label{eq:TransitionMatrixErrorLargeSampleForCorruptedStrong}
 M_0 T' \wge C \frac{\log(8\abs{\Omega}/\eps) + M_0 \eta^{(0)}}{\big(\barpimin^{(0)}\big)^2}.
\end{equation}
We note that \eqref{eq:TransitionMatrixErrorLargeSampleForCorruptedStrong}
implies \eqref{eq:TransitionMatrixErrorLargeSampleForCorrupted}
when $C \ge \tC_3$.
We also that that under 
\eqref{eq:TransitionMatrixErrorLargeSampleForCorruptedStrong} and
\eqref{eq:StationaryDistributionErrorNewForCorrupted},
\begin{align*}
 \supnorm{\hat \pi^{(0)} - \bar\pi^{(0)}}
 \wle \tC_4 \sqrt{ \frac{\log(4\abs{\Omega}/\eps) + M_0 \eta^{(0)}}{M_0 T'} }
 \wle \frac{\tC_4 \barpimin^{(0)}}{\sqrt{C}}
 \wle \frac12 \barpimin^{(0)},
\end{align*}
when $C \ge 2 \tC_4^2$.
We define $C_4 = \max\{\tC_3, 2 \tC_4^2\}$
and assume from now on that \eqref{eq:TransitionMatrixErrorLargeSampleForCorruptedStrong} holds for $C = C_4$.

(iv) Let $\cA$ be the event that
\eqref{eq:TransitionMatrixErrorNewForCorrupted} and
\eqref{eq:StationaryDistributionErrorNewForCorrupted} both hold.
Then \eqref{eq:TransitionMatrixErrorLargeSampleForCorrupted} is valid,
and we conclude by the union bound that $\pr(\cA) \ge 1 - \eps$.
On the event $\cA$, 
we see that
$\hat\pi_{\rm min}^{(0)} \ge \barpimin^{(0)} - \supnorm{\hat \pi^{(0)} - \barpi^{(0)}} \ge \frac12 \barpimin^{(0)}$,
so that $\hat\pi_{\rm min}^{(0)} > 0$.
Therefore, the inequalities \eqref{eq:TransitionMatrixErrorCorrupted1}, 
\eqref{eq:TransitionMatrixErrorNewForCorrupted}, and 
\eqref{eq:StationaryDistributionErrorNewForCorrupted} are all valid
on the event $\cA$, and we conclude
applying
$2 \frac{M_1/M}{\hat\pi^{(0)}_{\rm min}} \le 4 \frac{M_1/M}{\barpimin^{(0)}}$
that
\[
 \supnorm{\hat P - P}
 \wle \tC_1 \sqrt{ \frac{\abs{\Omega} \log(8\abs{\Omega}/\eps)}{\barpimin^{(0)} M_0 T} }
 + \tC_2 \min \left\{ \frac{\De_1^{(0)}}{\barpimin^{(0)}}, \, \De_\infty^{(0)} \right\}
 + 4 \frac{M_1/M}{\barpimin^{(0)}}.
\]
This confirms that 
Theorem~\ref{the:TransitionMatrixErrorCorrupted}
is valid with universal constants
$C_1 = \tC_1$, 
$C_2 = \tC_2$, 
$C_3 = 4$,
and
$C_4 = \max\{\tC_3, 2 \tC_4^2\}$.

By noting that in Theorem 3.1 we may take $(\tC_1,\tC_2,\tC_3) = (7,2,144)$
and in Theorem 3.2 we may take $\tC_4 = 8$,
we see that here we may take $(C_1,C_2,C_3,C_4) = (7,2,4,144)$.
}
\end{proof}

\subsection{Auxiliary technical results}
\label{sec:Technical}

\subsubsection{Information theory}
\label{sec:InformationTheory}

The \emph{\Renyi divergence} of order $\al > 1$ is defined by the formula
$D_\al(P \| Q) = (\al-1)^{-1} \int ( \frac{dP}{dQ} )^\al dQ$ when $P$ is absolutely continuous
against $Q$, and $D_\al(P \| Q) = \infty$ otherwise.
\Renyi divergences and $\chi^2$-divergences are related by the formula
$D_2(P \| Q) = \log( 1 + \chi^2(P \| Q) )$, 
see \cite{Polyanskiy_Wu_2024,VanErven_Harremoes_2014} for details.

\begin{lemma}
\label{the:RenyiBound}
For any probability measures $P,Q$, any $\al > 1$, and any measurable set $A$,
\[
 P(A)
 \wle e^{(1-1/\al) D_\al(P \| Q)} \, Q(A)^{1-1/\al}.
\]
\end{lemma}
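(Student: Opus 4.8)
The plan is to bound $P(A)$ by integrating the Radon--Nikodym derivative over $A$ and applying Hölder's inequality with a carefully chosen pair of conjugate exponents. First I would note that the inequality is trivial if $D_\al(P \| Q) = \infty$, so I may assume $P \ll Q$ and write $f = \frac{dP}{dQ}$. Then $P(A) = \int_A f \, dQ = \int \mathbf{1}_A f \, dQ$.

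Next I would apply Hölder's inequality with exponents $p = \al$ and $q = \frac{\al}{\al - 1}$, which are conjugate since $\frac{1}{p} + \frac{1}{q} = 1$. This gives
\[
 \int \mathbf{1}_A f \, dQ
 \wle \Big( \int f^{\al} \, dQ \Big)^{1/\al} \Big( \int \mathbf{1}_A^{\,\al/(\al-1)} \, dQ \Big)^{(\al-1)/\al}
 \weq \Big( \int f^{\al} \, dQ \Big)^{1/\al} Q(A)^{1 - 1/\al}.
\]
Finally I would recall the definition $D_\al(P \| Q) = (\al-1)^{-1} \log \int f^\al \, dQ$ (noting that the displayed formula in the excerpt omits the logarithm, but the standard Rényi divergence of order $\al$ is $(\al-1)^{-1}\log\int (dP/dQ)^\al \, dQ$, consistent with the stated relation $D_2 = \log(1+\chi^2)$), so that $\int f^\al \, dQ = e^{(\al-1) D_\al(P\|Q)}$ and hence $\big(\int f^\al \, dQ\big)^{1/\al} = e^{(1 - 1/\al) D_\al(P \| Q)}$. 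Substituting this into the Hölder bound yields the claim.

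I do not anticipate a genuine obstacle here; the only point requiring mild care is the normalisation convention for the Rényi divergence (whether the logarithm is included), which must be matched to the definition used elsewhere in the paper so that the exponent $(1 - 1/\al) D_\al$ comes out correctly. Everything else is a direct application of Hölder's inequality.
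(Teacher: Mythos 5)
Your proof is correct and follows essentially the same route as the paper: Hölder's inequality applied to the Radon--Nikodym derivative $\frac{dP}{dQ}$ with conjugate exponents $\al$ and $\al/(\al-1)$, followed by the identity $\int (\frac{dP}{dQ})^\al \, dQ = e^{(\al-1) D_\al(P\|Q)}$. Your side remark about the normalisation is also well taken: the paper's displayed definition of $D_\al$ omits the logarithm, but its own proof (and the stated relation $D_2(P\|Q) = \log(1+\chi^2(P\|Q))$) uses the standard logarithmic convention, exactly as you assumed.
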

\begin{proof}
We assume that $P$ is absolutely continuous with respect to $Q$,
because otherwise $D_\al(P \| Q) = \infty$ and the claim is trivial.
\Holder's inequality applied to the Radon--Nikodym derivative $\frac{dP}{dQ}$ implies that
\begin{align*}
 P(A)
 \weq \int \frac{dP}{dQ} 1_A \, dQ
 \wle \left( \int \left( \frac{dP}{dQ}\right)^\al dQ \right)^{1/\al} \left( \int 1_A^\be dQ \right)^{1/\be},
\end{align*}
where $\frac{1}{\be} = 1 - \frac{1}{\al}$.
Because
$
 \int ( \frac{dP}{dQ} )^\al dQ
 = e^{(\al-1) D_\al(P \| Q)}
$
and $ \int 1_A^\be dQ = Q(A)$,
the claim follows.
\end{proof}

\begin{lemma}
\label{the:NonstationaryMC}
Let $\pr_\mu$ (resp.\ $\pr_\nu$) be the law of a random matrix $(X_{m,t})$ with independent rows, in which
each row $m$ is a Markov chain with transition matrix $P_m$ and initial distribution $\mu_m$ (resp.\ $\nu_m$).
Then for any $\cA \subset \Omega^{M \times (T+1)}$, and any $\al>1$,
\[
 \pr_\mu(\cA)
 \wle e^{(1 - \frac{1}{\al} ) \sum_m D_\al( \mu_m \| \nu_m )} \ \pr_\nu(\cA)^{1 - \frac{1}{\al}}.
\]
\end{lemma}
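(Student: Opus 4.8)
The plan is to reduce the claim to Lemma~\ref{the:RenyiBound} applied to the full product laws $\pr_\mu$ and $\pr_\nu$ on $\Omega^{M \times (T+1)}$, and then to use the tensorisation (additivity) of R\'enyi divergence under independent products together with its invariance under a shared Markov kernel. Concretely, since the rows of $(X_{m,t})$ are independent, both $\pr_\mu$ and $\pr_\nu$ factor as products $\pr_\mu = \bigotimes_{m=1}^M \pr_{\mu_m}$ and $\pr_\nu = \bigotimes_{m=1}^M \pr_{\nu_m}$, where $\pr_{\mu_m}$ denotes the law on $\Omega^{T+1}$ of a single Markov chain with transition matrix $P_m$ and initial law $\mu_m$. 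Lemma~\ref{the:RenyiBound} applied with $P = \pr_\mu$, $Q = \pr_\nu$, and the set $\cA$ gives
\[
 \pr_\mu(\cA) \wle e^{(1 - \frac{1}{\al}) D_\al(\pr_\mu \| \pr_\nu)} \, \pr_\nu(\cA)^{1 - \frac{1}{\al}},
\]
so it remains to show $D_\al(\pr_\mu \| \pr_\nu) \le \sum_m D_\al(\mu_m \| \nu_m)$.

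The first step is the additivity of R\'enyi divergence over independent coordinates: $D_\al\big(\bigotimes_m \pr_{\mu_m} \,\big\|\, \bigotimes_m \pr_{\nu_m}\big) = \sum_m D_\al(\pr_{\mu_m} \| \pr_{\nu_m})$. This follows directly from the product structure of the Radon--Nikodym derivative, $\frac{d\pr_\mu}{d\pr_\nu}(x) = \prod_m \frac{d\pr_{\mu_m}}{d\pr_{\nu_m}}(x_{m,:})$, inserted into the integral defining $D_\al$ and using Fubini; I would cite \cite{VanErven_Harremoes_2014} or \cite{Polyanskiy_Wu_2024} rather than reprove it. The second step is the data-processing / kernel-invariance bound: for a single chain, the law $\pr_{\mu_m}$ on $\Omega^{T+1}$ is the image of $\mu_m$ under the Markov kernel $K_m(x_0, dx_{1:T}) = \prod_{t=1}^{T} P_m(x_{t-1}, x_t)$, and the same kernel $K_m$ sends $\nu_m$ to $\pr_{\nu_m}$. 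Since R\'enyi divergence is non-increasing under a common channel (the data-processing inequality for $D_\al$, $\al>1$), and since applying $K_m$ to the pair $(\delta_{x_0}\otimes\text{nothing})$... more precisely, writing $\mu_m \mapsto \mu_m(dx_0) K_m(x_0, dx_{1:T})$ as a channel acting on the initial-state distribution, we get $D_\al(\pr_{\mu_m} \| \pr_{\nu_m}) \le D_\al(\mu_m \| \nu_m)$. Combining the two steps yields $D_\al(\pr_\mu \| \pr_\nu) \le \sum_m D_\al(\mu_m \| \nu_m)$, and substituting into the displayed inequality above completes the proof.

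The main obstacle is making the single-chain inequality $D_\al(\pr_{\mu_m} \| \pr_{\nu_m}) \le D_\al(\mu_m \| \nu_m)$ clean. One can phrase it as data processing for the channel "run the chain for $T$ steps", but a self-contained argument is also short: factor $\frac{d\pr_{\mu_m}}{d\pr_{\nu_m}}(x_{0:T}) = \frac{d\mu_m}{d\nu_m}(x_0)$ because the transition factors $\prod_t P_m(x_{t-1},x_t)$ cancel in the likelihood ratio; then $\int \big(\frac{d\pr_{\mu_m}}{d\pr_{\nu_m}}\big)^\al d\pr_{\nu_m} = \int \big(\frac{d\mu_m}{d\nu_m}(x_0)\big)^\al \pr_{\nu_m}(dx_{0:T}) = \int \big(\frac{d\mu_m}{d\nu_m}\big)^\al d\nu_m = e^{(\al-1)D_\al(\mu_m\|\nu_m)}$, using that the $x_0$-marginal of $\pr_{\nu_m}$ is $\nu_m$. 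This in fact gives equality $D_\al(\pr_{\mu_m}\|\pr_{\nu_m}) = D_\al(\mu_m\|\nu_m)$, which is slightly stronger than needed; one must only be careful with the absolute-continuity hypothesis (if some $\mu_m \not\ll \nu_m$ then $D_\al(\mu_m\|\nu_m)=\infty$ and the bound is vacuous), and the case $\al = 2$ used in the proof of Theorem~\ref{the:StateFrequencies} is covered since there $D_2 = \eta$ per chain after normalising by $M$.
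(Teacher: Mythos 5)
Your proposal is correct and follows essentially the same route as the paper: tensorisation of the R\'enyi divergence over the independent rows, cancellation of the common transition factors in the likelihood ratio so that $D_\al(\pr_{\mu_m}\|\pr_{\nu_m}) = D_\al(\mu_m\|\nu_m)$, and then an application of Lemma~\ref{the:RenyiBound}. The data-processing detour you sketch in the middle is superfluous, since your self-contained cancellation argument (which is exactly what the paper does) already yields the needed identity.
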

\begin{proof}
We may write 
$\pr_\mu = \bigotimes_m f_m$ and $\pr_\nu =  \bigotimes_m g_m$, where $f_m,g_m$ denote path distributions
of the Markov chains, with probability mass functions
\begin{align*}
 f_m(x_0,\dots, x_T) &\weq \mu_m(x_0) P_m(x_0,x_1) \cdots P_m(x_{T-1}, x_T), \\
 g_m(x_0,\dots, x_T) &\weq \nu_m(x_0) P_m(x_0,x_1) \cdots P_m(x_{T-1}, x_T).
\end{align*}
Because \Renyi divergences tensorise
\citep{Leskela_2024,VanErven_Harremoes_2014},
we find that
$
 D_\al( \pr_\mu \| \pr_\nu )
 = \sum_m D_\al( f_m \| g_m ).
$
Because
$\frac{f_m(x_0, \dots, x_T)}{g_m(x_0, \dots, x_T)} = \frac{\mu_m(x_0)}{\nu_m(x_0)}$,
we also find that
$
 D_\al( f_m \| g_m )
 = D_\al( \mu_m \| \nu_m ).
$
Hence
$
 D_\al( \pr_\mu \| \pr_\pi )
 = \sum_m D_\al( \mu_m \| \nu_m ),
$
and the claim follows by Lemma~\ref{the:RenyiBound}.
\end{proof}

\subsubsection{Linear algebra}

\begin{lemma}
\label{the:SpectralNorm}
For any $u \in \R^n$, 
the spectral norm of $U = \diag(u) - u u^\top$
is bounded by
$\spenorm{U}^2 \le \sum_{i=1}^n u_i^2 (1 - 2 u_i + \norm{u}_2^2 )$.
\end{lemma}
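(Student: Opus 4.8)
The plan is to bound the spectral norm by the Frobenius norm and then compute the latter exactly. Since the spectral norm of any matrix is its largest singular value while the Frobenius norm squared is the sum of squares of all singular values, we have $\spenorm{U}^2 \le \norm{U}_F^2 = \sum_{i,j} U_{ij}^2$, and it therefore suffices to show that this Frobenius norm squared equals the right-hand side of the claimed inequality.

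First I would read off the entries of $U = \diag(u) - u u^\top$: the diagonal entries are $U_{ii} = u_i - u_i^2$ and the off-diagonal entries are $U_{ij} = -u_i u_j$ for $i \ne j$. Hence
\[
 \norm{U}_F^2 \weq \sum_{i=1}^n (u_i - u_i^2)^2 + \sum_{i \ne j} u_i^2 u_j^2 .
\]
Next I would expand the first sum as $\sum_i (u_i^2 - 2 u_i^3 + u_i^4)$ and rewrite the second using $\sum_{i \ne j} u_i^2 u_j^2 = \big( \sum_i u_i^2 \big)^2 - \sum_i u_i^4 = \norm{u}_2^4 - \sum_i u_i^4$. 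Adding these, the $u_i^4$ contributions cancel, leaving $\norm{U}_F^2 = \sum_i u_i^2 - 2 \sum_i u_i^3 + \norm{u}_2^4$. Finally I would note that the right-hand side of the lemma expands as $\sum_i u_i^2 (1 - 2 u_i + \norm{u}_2^2) = \sum_i u_i^2 - 2 \sum_i u_i^3 + \norm{u}_2^2 \sum_i u_i^2 = \sum_i u_i^2 - 2 \sum_i u_i^3 + \norm{u}_2^4$, which matches $\norm{U}_F^2$ exactly; combined with $\spenorm{U} \le \norm{U}_F$ this gives the claim.

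There is no genuine obstacle here; the only care needed is the bookkeeping in separating diagonal from off-diagonal contributions and recognising that the resulting expression coincides with the target. A direct eigenvalue analysis of $U$ — a rank-one perturbation of a diagonal matrix — would be considerably more cumbersome, so routing through the Frobenius norm is the clean route, and the (harmless) slack in $\spenorm{U} \le \norm{U}_F$ is precisely what the inequality in the statement leaves room for.
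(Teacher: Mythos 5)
Your proof is correct: the Frobenius norm computation is right, the $u_i^4$ terms do cancel, and the resulting expression coincides exactly with the stated bound, so $\spenorm{U}^2 \le \|U\|_F^2$ gives the claim. This is essentially the same argument as the paper's: the paper bounds $\norm{Ux}_2^2 = \sum_i u_i^2 ((e_i-u)^\top x)^2$ by Cauchy--Schwarz applied row by row, which is precisely the standard derivation of the spectral-versus-Frobenius bound, and its upper bound $\sum_i u_i^2 \norm{e_i-u}_2^2$ is exactly your $\|U\|_F^2$; the only difference is that exploiting the rank-one row structure $U_{i,:} = u_i (e_i - u)^\top$ lets the paper skip the entrywise expansion and cancellation you carry out by hand.
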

\begin{proof}
Denote by $e_i$ the $i$-th standard basis vector of $\R^n$.
Note that
\begin{align*}
 \norm{Ux}_2^2
 \weq \sum_{i=1}^n \big(u_i x_i - u_i (u^\top x) \big)^2
 \weq \sum_{i=1}^n u_i^2 \big( (e_i - u)^\top x \big)^2.
\end{align*}
Then by the Cauchy--Schwarz inequality,
$
 \norm{Ux}_2^2
 \le \sum_{i=1}^n u_i^2 \norm{e_i - u}_2^2 \norm{x}_2^2.
$
Because the latter inequality holds for all $x$,
we conclude that
\[
 \spenorm{U}^2
 \wle \sum_{i=1}^n u_i^2 \norm{e_i - u}_2^2
 \weq \sum_{i=1}^n u_i^2 (1 - 2 u_i + \norm{u}_2^2 ).
\]
\end{proof}

\paragraph{Acknowledgments.}
\newtext{The\margnote{Thank you!} authors thank the anonymous referee for constructive comments that have improved the quality of this article.}

\ifarxiv
\bibliographystyle{apalike}
\footnotesize
\fi

\bibliography{lslReferences}

\end{document}

%
%

\clearpage

\section{Leftovers}

\subsection{Discussing a naive approach to time averages}

\begin{bcomm}
Trivial approach: 
Assume that $(X_{m,t})$ has independent rows that are stationary Markov chains
with stationary distribution $\pi_m$ and transition matrix $P_m$.
Assume for simplicity that $\pi_m(f_m) = 0$ for all $m$.
We want a concentration inequality for $\sum_m \sum_t f_m(X_{m,t})$.

Model $(X_{m,t})$ as a Markov chain $(X_{:,t})$ with values in $\Omega^M$.
This Markov chain has stationary distribution $\Pi = \bigotimes \pi_m$.
Define $F(x_1,\dots,x_M) = \sum_m f_m(x_m)$.
Then $\sum_m \sum_t f_m(X_{m,t}) = \sum_t F(X_{:,t})$.
We apply a univariate Paulin--Bernstein
inequality to $S = \sum_t F(X_{:,t})$.
Here $\Pi(F) = 0$ so that $\E S = 0$.
Then \cite[Theorem 3.4]{Paulin_2015} implies that
$\pr( \abs{S} \ge s ) \le 2 e^{-w}$ where
\[
 w
 \weq \frac{\ga s^2 }{8 (T+1/\ga)V_F + 20 \Delta_F s},
\]
$\ga$ is the pseudo-spectral gap of $\bigotimes_m P_m$,
and $V_F = \Var_\Pi( F ) = \sum_m \sum_x f_m(x)^2 \pi_m(x)$
and $\Delta_F = \max_x \abs{F(x)} = \max_{x_1,\dots,x_M} \abs{\sum_m f_m(x_m)}$.
Let us apply this as in Proposition~\ref{the:StateFrequenciesInd}.
Fix $f_m(x) = 1_{\{i\}}(x) - \pi_m(i)$.
Then $V_F = \sum_m (1-\pi_m(i)) \pi_m(i)) \le \sum_m \pi_m(i) = M \barpi_i$.
Then
\[
 \Delta_F
 \weq \max_{x \in \Ome^M} \abs{\sum_m (1(x_m = i) - \pi_m(i))}.
\]
Here $\Delta_F \le M$.  By plugging in $x = (0,\dots,0)$ and $x=(1,\dots,1)$,
we find that $\Delta_F \ge M \max\{1-\barpi_i, \barpi_i\} \ge \frac12 M$.
It follows that
\[
 w
 \wgesim \frac{\ga s^2 }{(T+1/\ga)M \barpi_i + M s},
\]
The left side does not grow to infinite in the setting with $\barpi_i, \ga, T \asymp 1$, $M \gg 1$, 
$s = \frac12 MT \barpi_i$.  This is bad.
\end{bcomm}

\begin{gcomm}
Alternatively, Theorem~\ref{the:StateFrequencies} implies that
$\pr( \abs{S} \ge s ) \le 2 e^{-w}$ where
\[
 w
 \weq \frac{\gamin s^2}{16 (T+1/\gamin) V + 40 \De s},
\]
$\De = \max_m \max_x \abs{f_m(x)}$
and
$V = \sum_m \sum_x f_m(x)^2 \pi_m(x)$.
Let us apply this as in Proposition~\ref{the:StateFrequenciesInd}.
Fix $f_m(x) = 1_{\{i\}}(x) - \pi_m(i)$.
Then $\De \le 1$.
Then $V = \sum_m (1-\pi_m(i)) \pi_m(i)) \le \sum_m \pi_m(i) = M \barpi_i$.
Then
\[
 w
 \wgesim \frac{\gamin s^2 }{(T+1/\gamin)M \barpi_i + s},
\]

\end{gcomm}

\subsection{Special Markov noise}

Compare with Example~\ref{exa:Dichotomy}.

Fix a finite set $\Ome$ and stochastic matrices $P^0, P^1 \in [0,1]^{\abs{\Ome} \times \abs{\Ome}}$
with stationary probability distributions $\pi^0, \pi^1$, respectively.
Fix integers $M_0, M_1, T \ge 1$ and denote $M = M_0+M_1$.
We observe a data array with entries $X_{m,t} \in \Ome$ indexed by $m=1,\dots,M$ and $t=0,\dots,T$,
in which the rows are independent stationary Markov chains, so that $M_0$ rows are distributed
according to $(\pi^0, P^0)$, and the remaining $M_1$ rows 
according to $(\pi^1, P^1)$.

\begin{theorem}
\label{the:RobustMarkovEstimation}
Assume that $\delta=M_1/M \le \frac12$.
Then the estimate $\hat P$ is bounded by
\begin{equation}
 \label{eq:RobustMarkovEstimation}
 \supnorm{\hhP - P^0}
 \wle \eps + 4 \frac{M_1/M}{\piminzero} \Big( \eps + \supnorm{P^1 - P^0} \Big)
\end{equation}
with complementary probability at most
\begin{align*}
 2 \abs{\Ome} \sum_k e^{- \ga'_k \pimink M_k T} + \abs{\Ome} (1+\abs{\Ome}) \sum_k  e^{- \eps' \pimink M_k T}.
\end{align*}
in which
$\pimink = \min_i \pi_i$,
$\eps' = \frac{\eps^2}{12 \sqrt{2} \abs{\Ome} + \frac43 \sqrt{2} \abs{\Ome}^{1/2} \eps},
$
and
$\ga'_k = \frac{\gamma_k}{72 + 32/(\gamma_k T)}$
with $\ga_k$ being the pseudo-spectral gap of the chain $k$.
\end{theorem}

\begin{theorem}
\label{the:RobustMarkovEstimationOld}
Assume that $\delta=M_1/M$ is bounded by $0 < \delta \le \frac12$.
Then for any $s \ge ( 8 \abs{\Ome} )^{1/2}$,
the estimate $\hat P$ is bounded by
\begin{equation}
 \label{eq:RobustMarkovEstimationOld}
 \supnorm{\hhP - P^0}
 \wle \frac{1}{ (\piminzero)^{1/2}} \bigg( 1 + \frac{6 \delta^{1/2}}{(\piminzero)^{1/2} c_1^{1/2}} \bigg) \frac{s}{\sqrt{M T}}
  + \frac{4 \delta}{\piminzero} \supnorm{P^1 - P^0}
\end{equation}
with complementary probability at most
\begin{align*}
 2 \abs{\Ome} e^{- \ga'_k \pimink M_k T} + \abs{\Ome} (1+\abs{\Ome}) e^{- \eps' \pimink M_k T}.
\end{align*}
in which
$\ga'_k = \frac{\gamma_k}{72 + 32/(\gamma_k T)}$
and $\pimink = \min_i \pi_i^k$,
with $\ga_k$ being the pseudo-spectral gap of the chain $k$.
Furthermore, the bound \eqref{eq:RobustMarkovEstimation} holds for $\delta = 0$
with complementary probability at most
\begin{align*}
 \abs{\Ome} e^{-s^2/16}
 + \abs{\Ome} e^{- \piminzero \tau_0 M_0}.
\end{align*}
\end{theorem}

\begin{proof}[Partial proof]
(ii) Assume that $\delta = M_1/M$ satisfies $0 < \delta \le \frac12 $. Assume that $\pimin > 0$.
Note that $\E N_i^k = \pi_i M_k T$.
Then $\E N_i^k > 0$, and it follows that $\min_i N_i^k > 0$ on the event $\cG_k$.
We find that on the event $\cG_0 \cap \cG_1$,
the estimated transition matrix equals
\[
 \hhP_{i,j}
 \weq \frac{N_i^0}{N_i} \hhP^0_{i,j} + \frac{N_i^1}{N_i} \hhP^1_{i,j},
\]
so that
\begin{align*}
 \hat P_{i,j} - P^0_{i,j}
 &\weq \frac{N_i^0}{N_i} \Big( \hhP_{i,j}^0 - P^0_{i,j} \Big)
 + \frac{N_i^1}{N_i} \Big( \hhP_{i,j}^1 - P^0_{i,j} \Big).
\end{align*}
Observe also that $\frac{N_i^0}{N_i} \le 1$ on $\cG_0$.
Furthermore, because $N_i^1 \le M_1 T = \de MT$
and $N_i \ge N_i^0 \ge \frac12 \E N_i^0 = \frac12\pi^0_i M_0 T \ge \frac12 (1-\de) \piminzero M T$
on $\cG_0$,
we conclude that
$
 \frac{N_i^1}{N_i}
 \le \frac{2 \delta}{\piminzero (1-\delta)}
 \le \frac{4 \delta}{\piminzero}
$
on $\cG_0$.
By the triangle inequality, on $\cG_0 \cap \cG_1$,
\begin{align*}
 \onenorm{\hhP_{i,:} - P^0_{i,:}}
 &\wle \onenorm{\hhP^0_{i,:} - P^0_{i,:}} + 4 \frac{\delta}{\piminzero} \onenorm{\hhP^1_{i,:} - P^0_{i,:}} \\
 &\wle \onenorm{\hhP^0_{i,:} - P^0_{i,:}} + 4 \frac{\delta}{\piminzero}
 \Big( \onenorm{\hhP^1_{i,:} - P^1_{i,:}} + \onenorm{P^1_{i,:} - P^0_{i,:}} \Big).
 \end{align*}
By recalling the definition of $\cF_i$, it follows that on $\cG_0 \cap \cG_1$,
\[
 \supnorm{\hhP - P^0}
 \wle \eps + 4 \frac{\delta}{\piminzero} \Big( \eps + \supnorm{P^1 - P^0} \Big).
\]
\end{proof}

\begin{code}
a<-1; gaps<-0.3;
s <- seq(0, 3, by=.01);
ga1 <- gaps/10;
x <- 5*s/a
theta.opt <- ga1*(1 - (1+x)^{-1/2});
val.opt <- -ga1*(s/x)*( (1+x)^(1/2)-1 )^2;
#val.opt.hi <- -(1/4)*ga1*s*x/(1+x);
val.opt.hi <- - gaps*s^2 / (8*a + 40*s)
val.opt.hi.paulin <- - gaps*s^2 / (8*a + 20*s)
plot(s, val.opt, type="l")
lines(s, val.opt.hi, col="red")
lines(s, val.opt.hi.paulin, col="blue")

ga1 <- gaps/10; x <- 5*s/a
theta.opt <- ga1*(1 - (1+x)^{-1/2});
val.opt <- -ga1*(s/x)*( (1+x)^(1/2)-1 )^2;
val.opt.hi <- -(1/4)*ga1*s*x/(1+x);
val.opt.hi2 <- - gaps*s^2 / (8*a + 40*s)
val.opt.hi.paulin <- - gaps*s^2 / (8*a + 20*s)
theta <- seq(0, 0.99*ga1, by=.001);
psi <- (s/x)*theta^2/(ga1-theta);
plot( theta, psi - theta*s, type="l"); abline(v=theta.opt, col="blue"); abline(h=val.opt, col="blue"); abline(h=val.opt.hi, col="red")
\end{code}

\end{document}